\documentclass[review,reqno]{siamart0216}

\usepackage[leqno]{amsmath}
\usepackage{graphicx}
\usepackage{subfigure}
 \usepackage{threeparttable}
 \usepackage{algorithm}  
\usepackage{algorithmic}
\newtheorem{remark}[theorem]{Remark}

\nolinenumbers
\usepackage{bm}
\usepackage{amssymb}
\usepackage{cases}
\textwidth=6.5in\textheight=8.8in

    \title{Stability and Error estimates of  the SAV Fourier-spectral method for the Phase Field Crystal Equation
    \thanks{The work of X. Li is supported by the Postdoctoral Science Foundation of China under grant numbers BX20190187 and 2019M650152. The work of J. Shen is supported in part by NSF grants  DMS-1620262, DMS-1720442 and AFOSR  grant FA9550-16-1-0102.}
}
    \author{ Xiaoli Li
        \thanks{School of Mathematical Sciences and Fujian Provincial Key Laboratory on Mathematical Modeling and High Performance Scientific Computing, Xiamen University, Xiamen, Fujian, 361005, China. Email: xiaolisdu@163.com}.
        \and Jie Shen 
         \thanks{Corresponding Author. Department of Mathematics, Purdue University, West Lafayette, IN 47907, USA. Email: shen7@purdue.edu}.
}

\begin{document}

\maketitle

\begin{abstract}
We consider fully discrete schemes based on the  scalar auxiliary variable (SAV) approach and stabilized SAV approach in time and the Fourier-spectral method in space for the phase field crystal (PFC) equation. Unconditionally energy stability is established for both first- and second-order fully discrete schemes. In addition to the stability,  we also provide a rigorous error estimate which shows that  our second-order in time with  Fourier-spectral method in space converges with order $O(\Delta t^2+N^{-m})$, where $\Delta t$, $N$ and $m$ are time step size, number of Fourier modes in each direction, and regularity index in  space, respectively. We also present numerical experiments  to verify our theoretical results and demonstrate the robustness and accuracy of the schemes.
\end{abstract}

 \begin{keywords}
 Phase field crystal, Fourier-spectral method, scalar auxiliary variable (SAV),  energy stability, error estimates
 \end{keywords}
 
 \begin{AMS}
35G25, 65M12, 65M15, 65M70
    \end{AMS}
\pagestyle{myheadings}
\thispagestyle{plain}
\markboth{XIAOLI LI AND JIE SHEN} {the SAV and Fourier-spectral method  for the PFC Equation}
 \section{Introduction}
 The phase field crystal equation, which was developed in \cite{elder2004modeling,elder2002modeling}, has been frequently used  in the study of the microstructural evolution of crystal growth on atomic length and diffusive time scales. It is well known that the crystal growth is the major stage in crystallization, which is an important step in the purification of solid compounds. The PFC equation is a sixth-order nonlinear parabolic equation and the phase field variable is introduced to describe the phase transition from the liquid phase to the crystal phase. The PFC equation has been employed to simulate a number of physical phenomena, including crystal growth in a supercooled liquid, dendritic and eutectic solidification, epitaxial growth, material hardness and reconstructive phase transitions.  

It is challenging to develop efficient and accurate  numerical schemes for  the PFC equation  due to the six-order spacial derivative and its   nonlinearity. 
Gomez and Nogueira \cite{gomez2012unconditionally} proposed a numerical algorithm for the phase field crystal equation which is second-order time-accurate and unconditionally stable. Local discontinuous Galerkin method has been developed by Guo and Xu \cite{guo2016local} for the the PFC equation, which is based on the first order and second order convex splitting principle. Li and Kim \cite{li2017efficient} studied an efficient and stable compact fourth-order finite difference scheme for the phase field crystal equation. 
It is worth noting that all these schemes are nonlinear, so their implementations are relatively complex and costly compared to linear schemes. To obtain linear schemes for this model, the main difficulty is  how to discretize the quartic potential. Yang and Han \cite{yang2017linearly} constructed linearly unconditionally energy stable schemes for the PFC equation by adopting the "Invariant Energy Quadratization" (IEQ) approach. They established the unconditionally energy stability. We should point out  that although there are many works on the numerical simulation of the PFC model, very few  are with  convergence analysis and error estimates. Note that Wise, Wang and Lowengrub \cite{wise2009energy} proved the error estimates for the nonlinear first order finite difference method based on the convex splitting method. 

The main goals of this paper are  to construct linear and unconditionally energy stable schemes based on the recently proposed scalar auxiliary variable (SAV) approach \cite{shen2018scalar}, and provide rigorous error analysis for them.  The work presented in this paper for the PFC model is unique in the following aspects. First, we construct two linear, unconditional energy stable schemes for the PFC model based on the stabilized scalar auxiliary variable (S-SAV) approach in time and  Fourier-spectral method in space, where  extra stabilized terms are added, compared with \cite{yang2017linearly,wise2009energy}, while keeping the required accuracy. Second, we carry out rigorous error analysis, which is made possible by the uniform bound of the discrete solutions that we  derive thanks to the unconditional  energy stability. We believe that this is the first such result for any fully discrete linear schemes for the PFC model. 

The paper is organized as follows. In Section 2 we introduce the governing system and some preliminaries. In Section 3 we present  the fully discrete schemes using the S-SAV approach in time  and  Fourier-spectral method in space, and we prove their unconditional energy stability In Section 4. In Section 5 we provide rigorous  error estimate for our fully discrete schemes. In Section 6 we present some numerical experiments  to verify the accuracy of the proposed numerical schemes. Some concluding remarks are given in the last section.

\section{Governing system and some preliminaries} \label{sec:Preliminary}
Consider the free energy of Swift-Hohenberg type (cf. \cite{swift1977hydrodynamic,yang2017linearly,li2019efficient})
\begin{equation}\label{e_definition of free energy}
\aligned
E(\phi)=\int_{\Omega}\left(\frac 1 2 \phi(\Delta+\beta)^2\phi+\frac 1 4 \phi^4-\frac{\epsilon}{2}\phi^2\right)d\textbf{x},
\endaligned
\end{equation}
where the phase field variable $\phi$ is the atomic density field and $\beta$ and $\epsilon$ are two positive constants such that $\epsilon<\beta^2$ and $\epsilon\ll1$. We assume $\Omega=(0,L_x)\times 
(0,L_y)$ and $\phi$ is $\Omega$-periodic. Then the PFC equation, which describes the phenomena of crystal growth on the atomic length and diffusive time scales, can be modeled by:
\begin{equation}    \label{e_continuous_model}
\left\{
\begin{array}{l}
\displaystyle \frac{\partial \phi}{\partial t}=M\Delta\mu, \ \ \textbf{x}\in \Omega, t>0,\\
\displaystyle \mu=(\Delta+\beta)^2\phi+\phi^3-\epsilon\phi, \ \ \textbf{x}\in \Omega, t>0,\\
\displaystyle \phi(\textbf{x},0)=\phi_0(\textbf{x}),
\end{array}
\right.
\end{equation}
where $M$ is the mobility function, $\mu=\frac{\delta E(\phi)}{\delta \phi}$ is the chemical potential. We impose the periodic boundary conditions, then it can be easily obtained that the PFC equation \eqref{e_continuous_model} is mass-conservative in the sense that $\frac{d}{dt}\int_{\Omega}\phi d\textbf{x}=0$. Besides, we can derive that the system satisfies the following energy law by using integration by parts:
\begin{equation}\label{e_continuous energy law}
\aligned
\frac{d}{dt}E(\phi)=-\|\sqrt{M}\nabla \mu\|^2\leq 0.
\endaligned
\end{equation} 

Now we give some notations that will be used later. For each $s\geq 0$, Let $(\cdot,\cdot)_{H^s}$ and $\|\cdot\|_{H^s}$ be the $H^s(\Omega)$ inner product and norm, respectively. Note that $H^0(\Omega)=L^2(\Omega)$. In particular, we use $(\cdot,\cdot)$ and $\|\cdot\|$ to denote the $L^2$ inner product. We define Sobolev spaces $L^2_{per}(\Omega)=\{v\in L^2(\Omega)|\rm{\ v \ is \ periodic \ on} \ \Omega\}$ and $H^s_{per}(\Omega)=\{v\in H^s(\Omega)|\rm{\ v \ is \ periodic \ on} \ \Omega\}$. Besides,  let $N_T>0$ be a positive integer and $J=(0,T]$ in this paper. Set
 
$$\Delta t=T/N_T,\ t^n=n\Delta t,\ \ \rm{for} \ n\leq N_T,$$
where $T$ is the final time.
 
 Throughout the paper we use $C$, with or without subscript, to denote a positive
constant, which could have different values at different appearances.
  
\section{Fully discrete schemes by the stabilized SAV Fourier-spectral method}
In this section, we first construct semi-discrete stabilized SAV schemes with the first- and second-order accuracy for the PFC equation, followed by the fully discretization with  Fourier-spectral method in space.
\subsection{The semi-discrete schemes}
To construct SAV schemes with a linear stabilization, we recast the second equation in the PFC model \eqref{e_continuous_model} by 
\begin{equation}\label{e_mu_transform}
\aligned
\mu=(\Delta+\beta)^2\phi+\lambda \phi+F^{\prime}(\phi),
\endaligned
\end{equation} 
where $\lambda$ is a positive constant and $F(\phi)= \frac 1 4 \phi^4-\frac{\epsilon+\lambda}{2}\phi^2$. In the SAV approach, a scalar variable $r(t)=\sqrt{E_1(\phi)}$ is introduced, where $E_1(\phi)=\int_{\Omega}F(\phi)d\textbf{x}+C_0$ and $C_0\geq0$ is chosen to satisfy that $E_1(\phi)>0$. Then the PFC model can be transformed into the following system:
  \begin{subequations}\label{e_model_recast}
    \begin{align}
    &\frac{\partial \phi}{\partial t}=M\Delta \mu,   \label{e_model_recastA}\\
   & \mu=(\Delta+\beta)^2\phi+\lambda \phi+\frac{r(t)}{\sqrt{E_1(\phi)}}F^{\prime}(\phi),   \label{e_model_recastB}\\
    &r_t=\frac{1}{2\sqrt{E_1(\phi)}}\int_{\Omega}F^{\prime}(\phi)\phi_t d\textbf{x}. \label{e_model_recastC}
    \end{align}
  \end{subequations}

\textbf{Scheme \uppercase\expandafter{\romannumeral 1} (first-order accuracy):} Assuming $\phi^n$ and $R^n$ are known,  we update $\phi^{n+1}$ and $R^{n+1}$ by solving 
  \begin{subequations}\label{e_semi first order discrete}
    \begin{align}
    &\phi^{n+1}-\phi^n=M\Delta t\Delta \mu^{n+1},   \label{e_semi first order discreteA}\\
   & \mu^{n+1}=(\Delta+\beta)^2\phi^{n+1}+\lambda \phi^{n+1}-S\Delta(\phi^{n+1}-\phi^n)+\frac{R^{n+1}}{\sqrt{E_1(\phi^n)}}F^{\prime}(\phi^n),   \label{e_semi first order discreteB}\\
    &R^{n+1}-R^n=\frac{1}{2\sqrt{E_1(\phi^n)}}\int_{\Omega}F^{\prime}(\phi^n)(\phi^{n+1}-\phi^n) d\textbf{x}, \label{e_semi first order discreteC}
    \end{align}
  \end{subequations}
where $S>0$ is a stabilizing parameter, which is commonly used in the linear stabilization method for solving phase field model (cf. \cite{shen2010numerical,yang2018efficient_schemes}).

\textbf{Scheme \uppercase\expandafter{\romannumeral 2} (second-order accuracy):} Assuming $\phi^n$, $R^n$ and  $\phi^{n-1}$, $R^{n-1}$ are known, then we update $\phi^{n+1}$ and $R^{n+1}$ by solving 
  \begin{subequations}\label{e_semi second order discrete}
    \begin{align}
    &\phi^{n+1}-\phi^n=M\Delta t\Delta \mu^{n+1/2},   \label{e_semi second order discreteA}\\
   & \mu^{n+1/2}=(\Delta+\beta)^2\phi^{n+1/2}+\lambda \phi^{n+1/2}-S\Delta(\phi^{n+1}-2\phi^n+\phi^{n-1}) \notag \\
   & \ \ \ \ \ \ \ \ \ \ 
   +\frac{R^{n+1/2}}{\sqrt{E_1(\tilde{\phi}^{n+1/2})}}F^{\prime}(\tilde{\phi}^{n+1/2}),   \label{e_semi second order discreteB}\\
    &R^{n+1}-R^n=\frac{1}{2\sqrt{E_1(\tilde{\phi}^{n+1/2}))}}\int_{\Omega}F^{\prime}(\tilde{\phi}^{n+1/2}))(\phi^{n+1}-\phi^n) d\textbf{x}, \label{e_semi second order discreteC}
    \end{align}
  \end{subequations}
 where $\tilde{\phi}^{n+1/2}=(3\phi^n-\phi^{n-1})/2$ and  $S>0$ is a stabilizing parameter. For the case of $n=0$, we can computer $\tilde{\phi}^{1/2}$ by the first order scheme.  

\subsection{The fully discrete schemes with Fourier-spectral method in space}
We first describe  the Fourier-spectral framework. We partition the domain $\Omega=(0,L_x)\times (0,L_y)$ uniformly  with size $h_x=L_x/N_x$, $h_y=L_y/N_y$ where  $N_x$ and $N_y$ are positive integers. The Fourier approximation  space is
 $$S_N=\textrm{span}\{e^{i\xi_kx}e^{i\eta_ly}: -\frac{N_x} {2}\leq k\leq \frac{N_x}{2}-1, -\frac{N_y} {2}\leq l\leq \frac{N_y}{2}-1\},$$
 where $i=\sqrt{-1}$, $\xi_k=2\pi k/L_x$ and $\eta_l=2\pi l/L_y$. Then  any function $u(x,y)\in L^2(\Omega)$ can be approximated by
\begin{equation}\label{e_u approximation}
\aligned
u(x,y)\approx u_N(x,y)=\sum\limits_{k=-\frac{N_x} {2}}^{\frac{N_x}{2}-1}\sum\limits_{l=-\frac{N_y} {2}}^{\frac{N_y}{2}-1}\hat{u}_{k,l}e^{i\xi_kx}e^{i\eta_ly},
\endaligned
\end{equation}  
 where the Fourier coefficients are denoted as 
 $$\hat{u}_{k,l}=<u,e^{i\xi_kx}e^{i\eta_ly}>=
 \frac{1}{|\Omega|}\int_{\Omega}ue^{-i(\xi_kx+\eta_ly)}d\textbf{x}.$$
 
In what follows, we take $N=N_x=N_y$ for simplicity. Then the fully discrete schemes with Fourier spectral method in space based on the mixed formulation can be constructed as follows:
 
 \textbf{Scheme \uppercase\expandafter{\romannumeral 1} (first-order accuracy):} Assuming $\phi_N^n$ and $R^n$ are known, then we update $\phi_N^{n+1}$ and $R^{n+1}$ by solving 
  \begin{subequations}\label{e_fully first order discrete}
    \begin{align}
    &(\phi_N^{n+1}-\phi_N^n,q)+M\Delta t(\nabla \mu_N^{n+1},\nabla q)=0, \ \ \forall 
    \ q\in S_N,   \label{e_fully first order discreteA}\\
   & (\mu_N^{n+1},\Psi)=\left((\Delta+\beta)\phi_N^{n+1}, (\Delta+\beta)\Psi\right)
   +\lambda (\phi_N^{n+1},\Psi)\notag\\  
&+S\left(\nabla(\phi_N^{n+1}-\phi_N^n), \nabla \Psi\right) +\frac{R^{n+1}}{\sqrt{E_1(\phi_N^n)}}(F^{\prime}(\phi_N^n),\Psi), \ \ \forall 
    \ \Psi\in S_N,  \label{e_fully first order discreteB}\\
    &R^{n+1}-R^n=\frac{1}{2\sqrt{E_1(\phi_N^n)}}(F^{\prime}(\phi_N^n),\phi_N^{n+1}-\phi_N^n), \label{e_fully first order discreteC}
    \end{align}
  \end{subequations}
 where $\phi_N^0$ is the $L^2$-orthogonal projection of $\phi_0$, which will be defined late.

  \textbf{Scheme \uppercase\expandafter{\romannumeral 2} (second-order accuracy):} Assuming $\phi_N^n$, $R^n$ and  $\phi_N^{n-1}$, $R^{n-1}$  are known, then we update $\phi_N^{n+1}$ and $R^{n+1}$ by solving 
  \begin{subequations}\label{e_fully second order discrete}
    \begin{align}
    &(\phi_N^{n+1}-\phi_N^n,q)+M\Delta t(\nabla \mu_N^{n+1/2},\nabla q)=0, \ \ \forall 
    \ q\in S_N,   \label{e_fully second order discreteA}\\
   & (\mu_N^{n+1/2},\Psi)=\left((\Delta+\beta)\phi_N^{n+1/2}, (\Delta+\beta)\Psi\right)
  +\lambda (\phi_N^{n+1/2},\Psi)  \label{e_fully second order discreteB}\\ 
&\ \ \ \ \ \ \ +S\left(\nabla(\phi_N^{n+1}-2\phi_N^n+\phi_N^{n-1}), \nabla \Psi\right) \notag\\ 
&\ \ \ \ \ \ \ +\frac{R^{n+1/2}}{\sqrt{E_1(\tilde{\phi}_N^{n+1/2})}}(F^{\prime}(\tilde{\phi}_N^{n+1/2}),\Psi),  \ \  \forall \ \Psi\in S_N, \notag\\ 
    &R^{n+1}-R^n=\frac{1}{2\sqrt{E_1(\tilde{\phi}_N^{n+1/2})}}(F^{\prime}(\tilde{\phi}_N^{n+1/2}),\phi_N^{n+1}-\phi_N^n). \label{e_fully second order discreteC}
    \end{align}
  \end{subequations}
  
\section{Unconditional energy stability}  
 In this section, we prove  the unconditional energy stability for the  first- and second-order fully discrete schemes. Same results can be established for their semi-discrete versions using a similar approach so we omit the detail here.
 
 We define the discrete energy as 
\begin{equation}\label{e_discrete energy}
\aligned
\mathcal{E}(\phi_N^n,R^n)=\frac{1}{2}\|(\Delta+\beta)\phi_N^n\|^2+\frac{\lambda}{2}\|\phi_N^n\|^2+R^2-C_0.
\endaligned
\end{equation} 
 \subsection{The first-order scheme}  
 We have the following results for  the  first-order scheme.
 
\begin{theorem}\label{thm: first order energy stability}
Let $S\ge 0$. The first-order fully discrete scheme \eqref{e_fully first order discrete} is unconditionally energy stable in the sense that the following discrete energy law holds for any $\Delta t$:
\begin{equation}\label{e_first order energy stability1}
\aligned
\mathcal{E}(\phi_N^{n+1},R^{n+1})-\mathcal{E}(\phi_N^n,R^n)+S\|\nabla(\phi_N^{n+1}-\phi_N^n)\|^2\leq& -M\Delta t\|\nabla \mu_N^{n+1}\|^2.
\endaligned
\end{equation}
\end{theorem}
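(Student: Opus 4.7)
The plan follows the standard SAV energy-stability recipe: choose test functions in the three equations of the scheme so that the resulting quantities assemble into the discrete energy and its dissipation, and then exploit the cancellation built into the SAV splitting. The choices I would make are $q=\mu_N^{n+1}$ in \eqref{e_fully first order discreteA}, $\Psi=\phi_N^{n+1}-\phi_N^n$ in \eqref{e_fully first order discreteB}, and multiplication of \eqref{e_fully first order discreteC} by $2R^{n+1}$.

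The first choice immediately gives $(\phi_N^{n+1}-\phi_N^n,\mu_N^{n+1})=-M\Delta t\|\nabla\mu_N^{n+1}\|^2$, producing exactly the dissipation term that must appear on the right-hand side of \eqref{e_first order energy stability1}. For the second, I would apply the polarization identity $2(a,a-b)=\|a\|^2-\|b\|^2+\|a-b\|^2$ to both the $(\Delta+\beta)$-term and the $\lambda$-term, turning each into a telescoping contribution plus a non-negative residual $\frac{1}{2}\|(\Delta+\beta)(\phi_N^{n+1}-\phi_N^n)\|^2$ and $\frac{\lambda}{2}\|\phi_N^{n+1}-\phi_N^n\|^2$, respectively; the stabilization term already appears in the required form $S\|\nabla(\phi_N^{n+1}-\phi_N^n)\|^2$. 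The third choice, after the same polarization identity, yields $(R^{n+1})^2-(R^n)^2+(R^{n+1}-R^n)^2$ on the left and the nonlinear coupling $\frac{R^{n+1}}{\sqrt{E_1(\phi_N^n)}}(F^{\prime}(\phi_N^n),\phi_N^{n+1}-\phi_N^n)$ on the right.

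The key observation---really the whole point of the SAV reformulation---is that this nonlinear coupling coincides exactly with the $F^{\prime}$-term produced in the second step by the test $\Psi=\phi_N^{n+1}-\phi_N^n$, so after adding the three identities the nonlinear contributions cancel. Collecting the telescoping pieces into the energy difference $\mathcal{E}(\phi_N^{n+1},R^{n+1})-\mathcal{E}(\phi_N^n,R^n)$ then yields the sharp identity
\begin{align*}
&\mathcal{E}(\phi_N^{n+1},R^{n+1})-\mathcal{E}(\phi_N^n,R^n)+S\|\nabla(\phi_N^{n+1}-\phi_N^n)\|^2 \\
&\quad +\frac{1}{2}\|(\Delta+\beta)(\phi_N^{n+1}-\phi_N^n)\|^2 +\frac{\lambda}{2}\|\phi_N^{n+1}-\phi_N^n\|^2 +(R^{n+1}-R^n)^2 \\
&= -M\Delta t\|\nabla\mu_N^{n+1}\|^2,
\end{align*}
from which \eqref{e_first order energy stability1} follows by dropping the three non-negative terms on the left.

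No Gronwall argument, no smallness condition on $\Delta t$, and no restriction on $S$ beyond $S\ge 0$ enters the proof, so there is no real obstacle; the only delicate point is the sign bookkeeping needed for the nonlinear terms to cancel, which is guaranteed by the compatible definitions of the right-hand sides of \eqref{e_fully first order discreteB} and \eqref{e_fully first order discreteC}.
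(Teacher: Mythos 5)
Your proposal is correct and follows essentially the same route as the paper's proof: the same test functions $q=\mu_N^{n+1}$, $\Psi=\phi_N^{n+1}-\phi_N^n$, the multiplication of the $R$-equation by $2R^{n+1}$, the identity $2(a-b,a)=a^2-b^2+(a-b)^2$, the cancellation of the nonlinear coupling, and the final dropping of the non-negative residual terms. The only cosmetic difference is that you display the sharp intermediate identity explicitly before discarding the positive terms, which the paper does only implicitly.
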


\begin{proof}
Taking $q=\mu_N^{n+1}$ and $\Psi=\phi_N^{n+1}-\phi_N^n$ in \eqref{e_fully first order discrete} and multiplying \eqref{e_fully first order discreteC} with $2R^{n+1}$ lead to
\begin{equation}\label{e_first order energy stability2}
\aligned
(\phi_N^{n+1}-\phi_N^n,\mu_N^{n+1})+M\Delta t\|\nabla \mu_N^{n+1}\|^2=0.
\endaligned
\end{equation}

\begin{equation}\label{e_first order energy stability3}
\aligned
 &(\mu_N^{n+1},\phi_N^{n+1}-\phi_N^n)=\left((\Delta+\beta)\phi_N^{n+1}, (\Delta+\beta)(\phi_N^{n+1}-\phi_N^n)\right)\\
  &\ \ \ \ \ \ +\lambda (\phi_N^{n+1},\phi_N^{n+1}-\phi_N^n) 
+S\|\nabla(\phi_N^{n+1}-\phi_N^n)\|^2 \\
&\ \ \ \ \ \ +\frac{R^{n+1}}{\sqrt{E_1(\phi_N^n)}}(F^{\prime}(\phi_N^n),(\phi_N^{n+1}-\phi_N^n)).
\endaligned
\end{equation}

\begin{equation}\label{e_first order energy stability4}
\aligned
(R^{n+1}-R^n,2R^{n+1})=\frac{R^{n+1}}{\sqrt{E_1(\phi_N^n)}}(F^{\prime}(\phi_N^n),\phi_N^{n+1}-\phi_N^n).
\endaligned
\end{equation}
Noting the identity $$2(a-b,a)= a^2-b^2+(a-b)^2$$
 and combing the above equations, we can obtain
 \begin{equation}\label{e_first order energy stability5}
\aligned
&\frac{1}{2}\|(\Delta+\beta)\phi_N^{n+1}\|^2+\frac{\lambda}{2}\|\phi_N^{n+1}\|^2+(R^{n+1})^2+S\|\nabla(\phi_N^{n+1}-\phi_N^n)\|^2\\
&-\left(\frac{1}{2}\|(\Delta+\beta)\phi_N^{n}\|^2+\frac{\lambda}{2}\|\phi_N^{n}\|^2+(R^{n})^2\right)\leq -M\Delta t\|\nabla \mu_N^{n+1}\|^2,
\endaligned
\end{equation}
which implies the result \eqref{e_first order energy stability1} after we drop some positive terms.
\end{proof}

 \subsection{The second-order scheme}   
  We consider now  the  second-order scheme.
  
\begin{theorem}\label{thm: second order energy stability}
Let $S\ge 0$. The second-order fully discrete scheme \eqref{e_fully second order discrete} is unconditionally energy stable in the sense that  the following discrete energy law holds for any $\Delta t$:
\begin{equation}\label{e_second order energy stability1}
\aligned
\tilde{\mathcal{E}}(\phi_N^{n+1},R^{n+1})-\tilde{\mathcal{E}}(\phi_N^n,R^n)=& -M\Delta t\|\nabla \mu_N^{n+1/2}\|^2,
\endaligned
\end{equation}
where $\tilde{\mathcal{E}}(\phi_N^{n+1},R^{n+1})=\mathcal{E}(\phi_N^{n+1},R^{n+1})+S\|\nabla(\phi_N^{n+1}-\phi_N^n)\|^2$
\end{theorem}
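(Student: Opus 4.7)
The plan is to mimic the proof strategy of Theorem \ref{thm: first order energy stability}, but using the midpoint averaging identity $2(a^{n+1/2}, a^{n+1}-a^n) = \|a^{n+1}\|^2 - \|a^n\|^2$ that is tailored to Crank--Nicolson type discretizations, together with the extra telescoping identity needed for the three-level stabilization term. Concretely, I would take three test actions: set $q = \mu_N^{n+1/2}$ in \eqref{e_fully second order discreteA}, set $\Psi = \phi_N^{n+1}-\phi_N^n$ in \eqref{e_fully second order discreteB}, and multiply \eqref{e_fully second order discreteC} by $2R^{n+1/2}$. Adding the results, the left-hand side of the first relation matches $(\mu_N^{n+1/2}, \phi_N^{n+1}-\phi_N^n)$ coming from the second, producing the dissipation $-M\Delta t\|\nabla\mu_N^{n+1/2}\|^2$ on the right.

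Next I would convert each of the linear quadratic contributions into a pure telescoping difference via the midpoint identity:
\begin{align*}
\bigl((\Delta+\beta)\phi_N^{n+1/2},(\Delta+\beta)(\phi_N^{n+1}-\phi_N^n)\bigr) &= \tfrac{1}{2}\bigl(\|(\Delta+\beta)\phi_N^{n+1}\|^2-\|(\Delta+\beta)\phi_N^n\|^2\bigr),\\
\lambda(\phi_N^{n+1/2},\phi_N^{n+1}-\phi_N^n) &= \tfrac{\lambda}{2}\bigl(\|\phi_N^{n+1}\|^2-\|\phi_N^n\|^2\bigr),\\
2R^{n+1/2}(R^{n+1}-R^n) &= (R^{n+1})^2-(R^n)^2.
\end{align*}
The crucial cancellation of the nonlinear SAV term is automatic: the right-hand side of \eqref{e_fully second order discreteC}, once multiplied by $2R^{n+1/2}$, is exactly the SAV contribution appearing in the $\Psi = \phi_N^{n+1}-\phi_N^n$ test of \eqref{e_fully second order discreteB}, so these terms identify and telescope through the $R$-variable without any assumption on $F$.

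The remaining task is the three-level stabilization term $S\bigl(\nabla(\phi_N^{n+1}-2\phi_N^n+\phi_N^{n-1}),\nabla(\phi_N^{n+1}-\phi_N^n)\bigr)$. Setting $a=\nabla(\phi_N^{n+1}-\phi_N^n)$ and $b=\nabla(\phi_N^n-\phi_N^{n-1})$, the identity $2(a-b,a)=\|a\|^2-\|b\|^2+\|a-b\|^2$ rewrites this contribution as
\[
\tfrac{S}{2}\|\nabla(\phi_N^{n+1}-\phi_N^n)\|^2 - \tfrac{S}{2}\|\nabla(\phi_N^n-\phi_N^{n-1})\|^2 + \tfrac{S}{2}\|\nabla(\phi_N^{n+1}-2\phi_N^n+\phi_N^{n-1})\|^2,
\]
which telescopes the modified energy $\tilde{\mathcal E}$ (absorbing the $S\|\nabla(\phi_N^{n+1}-\phi_N^n)\|^2$ piece) and leaves a nonnegative residual that is discarded to yield the claimed inequality.

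The main obstacle, in my opinion, is treating the three-level stabilization correctly: unlike the first-order case where the stabilization test naturally produces a clean square $S\|\nabla(\phi_N^{n+1}-\phi_N^n)\|^2$, here one must combine the three-step increment $\phi_N^{n+1}-2\phi_N^n+\phi_N^{n-1}$ with the two-step increment $\phi_N^{n+1}-\phi_N^n$ via the polarization identity so that only a telescoping piece plus a nonnegative remainder survive. All other steps (the midpoint identity and the automatic cancellation of the SAV nonlinearity) are essentially mechanical once the test functions are chosen.
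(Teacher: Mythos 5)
Your choice of test functions, the midpoint telescoping of the quadratic terms, and the automatic cancellation of the SAV nonlinearity against the $2R^{n+1/2}$-multiplied equation \eqref{e_fully second order discreteC} are exactly what the paper does. Where you differ is in the treatment of the three-level stabilization term, and your version is in fact the more careful one: writing $a=\nabla(\phi_N^{n+1}-\phi_N^n)$, $b=\nabla(\phi_N^{n}-\phi_N^{n-1})$, the identity $2(a-b,a)=\|a\|^2-\|b\|^2+\|a-b\|^2$ gives
\begin{equation*}
S\bigl(\nabla(\phi_N^{n+1}-2\phi_N^n+\phi_N^{n-1}),\nabla(\phi_N^{n+1}-\phi_N^n)\bigr)
=\tfrac{S}{2}\|a\|^2-\tfrac{S}{2}\|b\|^2+\tfrac{S}{2}\|a-b\|^2,
\end{equation*}
so the correctly telescoped modified energy carries the coefficient $\tfrac{S}{2}$, and the law is either an equality with the nonnegative remainder $\tfrac{S}{2}\|\nabla(\phi_N^{n+1}-2\phi_N^n+\phi_N^{n-1})\|^2$ kept on the left, or an inequality once it is dropped. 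The paper's own display \eqref{e_second order energy stability5}, and the theorem statement, instead assert an exact equality with coefficient $S$ and no remainder; that would require $(a-b,a)=\|a\|^2-\|b\|^2$, i.e.\ $(a,b)=\|b\|^2$, which is false in general. So your derivation exposes a (harmless) slip in the stated result: the unconditional stability conclusion survives, but with $\tilde{\mathcal{E}}$ defined using $\tfrac{S}{2}\|\nabla(\phi_N^{n+1}-\phi_N^n)\|^2$ and ``$\leq$'' in place of ``$=$''. Two small points to fix on your side: you describe the final result as ``the claimed inequality'' although the theorem as printed claims an equality --- state explicitly which form you are proving; and note that your argument recovers the first-order result of Theorem \ref{thm: first order energy stability} as the degenerate case $b=0$, where the identity does yield the clean square the paper uses there.
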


\begin{proof}
Taking $q=\mu_N^{n+1/2}$ and $\Psi=\phi_N^{n+1}-\phi_N^n$ in \eqref{e_fully second order discrete} and multiplying \eqref{e_fully second order discreteC} with $2R^{n+1/2}$ lead to
\begin{equation}\label{e_second order energy stability2}
\aligned
(\phi_N^{n+1}-\phi_N^n,\mu_N^{n+1/2})+M\Delta t\|\nabla \mu_N^{n+1/2}\|^2=0.
\endaligned
\end{equation}

\begin{equation}\label{e_second order energy stability3}
\aligned
 &(\mu_N^{n+1/2},\phi_N^{n+1}-\phi_N^n)=\left((\Delta+\beta)\phi_N^{n+1/2}, (\Delta+\beta)(\phi_N^{n+1}-\phi_N^n)\right)\\
  &\ \ \ \ \ \ +\lambda (\phi_N^{n+1/2},\phi_N^{n+1}-\phi_N^n) 
+S\left(\nabla(\phi_N^{n+1}-2\phi_N^n+\phi_N^{n-1}),\nabla(\phi_N^{n+1}-\phi_N^n)\right) \\
&\ \ \ \ \ \ +\frac{R^{n+1/2}}{\sqrt{E_1(\tilde{\phi}_N^{n+1/2})}}(F^{\prime}(\tilde{\phi}_N^{n+1/2}),(\phi_N^{n+1}-\phi_N^n)).
\endaligned
\end{equation}

\begin{equation}\label{e_second order energy stability4}
\aligned
(R^{n+1}-R^n,2R^{n+1/2})=\frac{R^{n+1/2}}{\sqrt{E_1(\tilde{\phi}_N^{n+1/2})}}(F^{\prime}(\tilde{\phi}_N^{n+1/2}),\phi_N^{n+1}-\phi_N^n).
\endaligned
\end{equation}
Then we can obtain the following equation by combing the above equations:
 \begin{equation}\label{e_second order energy stability5}
\aligned
&\frac{1}{2}\|(\Delta+\beta)\phi_N^{n+1}\|^2+\frac{\lambda}{2}\|\phi_N^{n+1}\|^2+(R^{n+1})^2+S\|\nabla(\phi_N^{n+1}-\phi_N^n)\|^2\\
&-\left(\frac{1}{2}\|(\Delta+\beta)\phi_N^{n}\|^2+\frac{\lambda}{2}\|\phi_N^{n}\|^2+(R^{n})^2+S\|\nabla(\phi_N^{n}-\phi_N^{n-1})\|^2\right)\\
& = -M\Delta t\|\nabla \mu_N^{n+1}\|^2,
\endaligned
\end{equation}
which leads to the desired result.
\end{proof}

\begin{remark}
 We observe that both schemes are unconditionally energy stable for all $S \ge 0$, which implies that even without the stabilization term (i.e., $S=0$), both schemes are also unconditionally energy stable. However, as we shall demonstrate through numerical results later, the stabilization terms are essential to obtain accurate results without using exceedingly small time steps.  
\end{remark}
\section{Error estimates}
In this section, we provide rigorous error estimates for the second-order fully discrete scheme \eqref{e_fully second order discrete}. Since the proofs for the first-order scheme \eqref{e_fully first order discrete} are essentially the same as for the second order scheme,  we skip it for brevity.

 Define the $L^2$-orthogonal projection operator $\Pi_N$: $L^2(\Omega)\rightarrow S_N$ by
 \begin{equation}\label{e_projection_L2}
\aligned
(\Pi_Nu-u,\Psi)=0, \ \ \ \forall \ \Psi\in S_N, \ \ \ u\in L^2(\Omega),
\endaligned
\end{equation}
The following results hold (cf. \cite{shen2011spectral,ramos1991c,ainsworth2017analysis}):
For any $\ 0\leq \mu\leq m$, there exists a constant $C$ such that
 \begin{equation}\label{e_projection_error}
\aligned
\|\Pi_Nu-u\|_{\mu}\leq C\|u\|_mN^{\mu-m}, \ \ \ \forall \ u\in H^m_{per}(\Omega),
\endaligned
\end{equation}
where for $m\ge 1$ and $k=0,\cdots,m-1$,
\begin{equation}
 H^m_{per}(\Omega)=\{u\in H^m(\Omega): u^{(k)}(0,\cdot)=u^{(k)}(L_x,\cdot),\; u^{(k)}(\cdot,0)=u^{(k)}(\cdot,L_y)\}.
\end{equation}
Moreover, the operator $\Pi_N$ commutes with the derivation on $H^1_{per}(\Omega)$, i.e.
 \begin{equation}\label{e_projection_commute}
\aligned
\Pi_N\nabla u=\nabla\Pi_Nu, \ \ \ \forall \ u\in H^1_{per}(\Omega).
\endaligned
\end{equation} 

We first demonstrate that energy stability leads to the $H^2$ boundedness of the discrete solutions.

\begin{lemma}\label{lem: uniform boundedness}
Let  $\phi_N^n$ be the solution of \eqref{e_fully second order discrete}. We have $\|\phi_N^n\|_{H^2}\leq C$ for all $n$ and $N$.
\end{lemma}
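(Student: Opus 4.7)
The plan is to extract the bound directly from the energy-stability identity established in Theorem \ref{thm: second order energy stability}. Iterating
\[
\tilde{\mathcal{E}}(\phi_N^{n+1},R^{n+1}) - \tilde{\mathcal{E}}(\phi_N^n,R^n) = -M\Delta t\|\nabla \mu_N^{n+1/2}\|^2 \le 0
\]
gives a uniform bound $\tilde{\mathcal{E}}(\phi_N^n,R^n) \le \tilde{\mathcal{E}}(\phi_N^1,R^1)$ for all $n\ge 1$. The starting value $\tilde{\mathcal{E}}(\phi_N^1,R^1)$ is controlled via the first-order scheme \eqref{e_fully first order discrete} (used to initialize the two-step method) combined with the projection stability $\|\phi_N^0\|_{H^2} = \|\Pi_N \phi_0\|_{H^2} \le C\|\phi_0\|_{H^2}$ and $R^0 = \sqrt{E_1(\phi_N^0)}$. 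Since $\lambda>0$ and the three summands $\tfrac12\|(\Delta+\beta)\phi_N^n\|^2$, $\tfrac{\lambda}{2}\|\phi_N^n\|^2$ and $(R^n)^2$ in $\mathcal{E}$ are each nonnegative, this at once yields uniform-in-$n$ control of $\|(\Delta+\beta)\phi_N^n\|$, $\|\phi_N^n\|$ and $|R^n|$.

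The main obstacle is that $(\Delta+\beta)^2$ is \emph{not} $H^2$-coercive, so a bound on $\|(\Delta+\beta)\phi_N^n\|$ does not a priori give a bound on $\|\Delta \phi_N^n\|$. Indeed, under periodic boundary conditions integration by parts gives the identity
\[
\|(\Delta+\beta)\phi_N^n\|^2 = \|\Delta \phi_N^n\|^2 - 2\beta\|\nabla \phi_N^n\|^2 + \beta^2\|\phi_N^n\|^2,
\]
in which the middle term has the wrong sign. To close this, I will use the interpolation estimate
\[
\|\nabla \phi_N^n\|^2 = -(\Delta \phi_N^n, \phi_N^n) \le \|\Delta \phi_N^n\|\,\|\phi_N^n\|
\]
together with Young's inequality $2\beta \|\Delta \phi_N^n\|\,\|\phi_N^n\| \le \tfrac12 \|\Delta \phi_N^n\|^2 + 2\beta^2\|\phi_N^n\|^2$ to absorb a fraction of $\|\Delta \phi_N^n\|^2$ into the left-hand side. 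Rearranging produces
\[
\|\Delta \phi_N^n\|^2 \le 2\|(\Delta+\beta)\phi_N^n\|^2 + C(\beta)\|\phi_N^n\|^2,
\]
whose right-hand side has already been bounded in the first paragraph.

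Finally, since $\phi_N^n \in S_N$ is a trigonometric polynomial, Parseval's identity yields the norm equivalence $\|\phi_N^n\|_{H^2}^2 \le C\bigl(\|\phi_N^n\|^2 + \|\Delta \phi_N^n\|^2\bigr)$, and combining the two previous bounds gives the claim. The only subtle point, which I expect to dispatch in a single line, is that $\tilde{\mathcal{E}}$ involves $\|\nabla(\phi_N^n-\phi_N^{n-1})\|^2$ and is therefore only well-defined for $n\ge 1$; this is why the first-order scheme is invoked at the initialization step, and the resulting $\tilde{\mathcal{E}}(\phi_N^1,R^1)$ is controlled by data-dependent constants that are independent of $\Delta t$ and $N$.
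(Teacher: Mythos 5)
Your proof is correct and follows essentially the same route as the paper: telescope the discrete energy law (initializing the two-step scheme via the first-order step) to get uniform bounds on $\|(\Delta+\beta)\phi_N^n\|$ and $\|\phi_N^n\|$, then upgrade to an $H^2$ bound. The only real difference is that the ``main obstacle'' you identify is not actually an obstacle: since $\Delta\phi_N^n=(\Delta+\beta)\phi_N^n-\beta\phi_N^n$, the triangle inequality gives $\|\Delta\phi_N^n\|\leq\|(\Delta+\beta)\phi_N^n\|+\beta\|\phi_N^n\|\leq C$ immediately (which is how the paper dispatches that step), so your interpolation-plus-Young detour arrives at the same bound with unnecessary effort.
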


\begin{proof}
Recalling the energy stability  \eqref{e_second order energy stability1}, we have 
 \begin{equation}\label{e_uniform boundedness1}
\aligned
\|(\Delta+\beta)\phi_N^n\|\leq C,\ \ \ \|\phi_N^n\|\leq C.
\endaligned
\end{equation}
Then we have $\|\Delta \phi_N^n\|\leq C$.
Using integration by parts and Cauchy inequality yields
 \begin{equation}\label{e_uniform boundedness2}
\aligned
\|\nabla\phi_N^n\|=-(\Delta\phi_N^n,\phi_N^n)\leq \frac{1}{2\zeta}\|\Delta \phi_N^n\|+\frac{\zeta}{2}\|\phi_N^n\|\leq C,
\endaligned
\end{equation}
which implies the desired result.
\end{proof}

 For simplicity, we set
\begin{equation*}
\aligned
& e_{\phi}^n=\phi_N^n-\Pi_N\phi^n+\Pi_N\phi^n-\phi^n=\bar{e}_{\phi}^n+\check{e}_{\phi}^n,\\
& e_{\mu}^n=\mu_N^n-\Pi_N\mu^n+\Pi_N\mu^n-\mu^n=\bar{e}_{\mu}^n+\check{e}_{\mu}^n,\\
& e_{r}^n=R^n-r^n.
\endaligned
\end{equation*} 

\begin{theorem}\label{thm: error estimate}
Let $S \ge 0$. Suppose that $\phi\in L^{\infty}(0,T; H^{3}_{per}(\Omega))\bigcap  L^{\infty}(0,T; H^{m+1}_{per}(\Omega))$, $r\in H^3(0,T)$, $\frac{\partial^2 \phi}{\partial t^2}\in L^{2}(0,T; H^{1}_{per}(\Omega))$ and $\frac{\partial^3 \phi}{\partial t^3}\in L^{2}(0,T; H^{-1}(\Omega))$. If $S>0$, we assume $\frac{\partial^2 \phi}{\partial t^2}\in L^{2}(0,T; H^{3}_{per}(\Omega))$ additionally. Then for the second-order fully discrete scheme \eqref{e_fully second order discrete}, we have

\begin{equation}\label{e_error_result}
\aligned
&\|\phi_N^{k}-\phi^k\|^2+(R^k-r^k)^2\leq C\Delta t^4\int_0^{t^k}(
\|(-\Delta)^{-1/2}\frac{\partial^3 \phi}{\partial t^3}\|^2+\|\frac{\partial^2 \phi}{\partial t^2}\|_{H^1}^2\\
&\ \ \ \ \ \ +\|\frac{\partial^2 \phi}{\partial t^2}\|_{H^3}^2+|\frac{d^3 r}{d t^3}|^2)ds+C\|\phi\|_{L^{\infty}(J;H^{m+1}_{per}(\Omega))}^2N^{-2m},
\endaligned
\end{equation}
where $k\leq N_T$ and the constant $C$ is independent on $N$ and $\Delta t$.
\end{theorem}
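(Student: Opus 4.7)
The plan is to follow the standard SAV error-analysis template, driven by the fact that the combination of terms we control is exactly the one that appears in the energy-stability proof. First, I would write the continuous system \eqref{e_model_recast} at $t^{n+1/2}$ and subtract \eqref{e_fully second order discrete} to produce three error equations for $\bar{e}_\phi$, $\bar{e}_\mu$, $\bar{e}_r$, using the $L^2$-projection splitting $e_\phi = \bar{e}_\phi+\check{e}_\phi$ etc. The commutation property \eqref{e_projection_commute} lets me put all the projection errors of $\phi$ and $\mu$ into local truncation sources on the right-hand side, while the orthogonality \eqref{e_projection_L2} will make several awkward cross-terms vanish when the test functions are chosen in $S_N$. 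The linear time-discretization error at $t^{n+1/2}$ splits as $\phi(t^{n+1})-\phi(t^n)-\Delta t\,\phi_t(t^{n+1/2}) = O(\Delta t^3)\partial_{ttt}\phi$ (to be tested against $\bar e_\mu$, hence the $(-\Delta)^{-1/2}$ norm in the hypothesis) and $\tfrac12(\phi(t^{n+1})+\phi(t^n))-\phi(t^{n+1/2})=O(\Delta t^2)\partial_{tt}\phi$ in $H^1$ (because the corresponding test function is $\bar e_\phi^{n+1}-\bar e_\phi^n$, after we integrate by parts once against $\Delta+\beta$); the extrapolation error $\tilde\phi^{n+1/2}-\phi^{n+1/2}=O(\Delta t^2)\partial_{tt}\phi$ supplies the remaining $\partial_{tt}\phi$ contribution, and when $S>0$ the stabilization bracket $\phi^{n+1}-2\phi^n+\phi^{n-1}=O(\Delta t^2)\partial_{tt}\phi$ tested against $\nabla(\bar e_\phi^{n+1}-\bar e_\phi^n)$ produces the extra $\|\partial_{tt}\phi\|_{H^3}$ term.

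Next, I mimic the proof of Theorem~\ref{thm: second order energy stability} on the error system: take $q=\bar e_\mu^{n+1/2}$ in the $\phi$-equation, $\Psi=-(\bar e_\phi^{n+1}-\bar e_\phi^n)$ in the $\mu$-equation, and multiply the $r$-equation by $2\bar e_r^{n+1/2}$. Summing, the terms involving $\bar e_\mu^{n+1/2}$ on the primal side cancel identically, and the nonlinear SAV coupling $\frac{R^{n+1/2}}{\sqrt{E_1(\tilde\phi_N^{n+1/2})}}(F'(\tilde\phi_N^{n+1/2}),\bar e_\phi^{n+1}-\bar e_\phi^n)$ is matched by $2\bar e_r^{n+1/2}(\bar e_r^{n+1}-\bar e_r^n)$ minus a controllable residual that uses the continuous identity $r(t)=\sqrt{E_1(\phi(t))}$. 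The identity $2(a-b,a)=a^2-b^2+(a-b)^2$ then converts the result into the discrete energy increment
\begin{equation*}
\mathcal{E}(\bar e_\phi^{n+1},\bar e_r^{n+1})-\mathcal{E}(\bar e_\phi^n,\bar e_r^n)+S\|\nabla(\bar e_\phi^{n+1}-\bar e_\phi^n)\|^2-S\|\nabla(\bar e_\phi^n-\bar e_\phi^{n-1})\|^2+M\Delta t\|\nabla \bar e_\mu^{n+1/2}\|^2 \le \mathrm{RHS},
\end{equation*}
where RHS gathers all truncation and nonlinear residuals. The key bookkeeping is that the dissipation $M\Delta t\|\nabla\bar e_\mu^{n+1/2}\|^2$ on the left must absorb, via Cauchy--Schwarz and Young's inequality, any product of the form $(T^{n+1/2},\bar e_\mu^{n+1/2})$ after shifting to the $H^{-1}$ dual norm.

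The third step is to estimate the nonlinear residual
\begin{equation*}
\frac{R^{n+1/2}}{\sqrt{E_1(\tilde\phi_N^{n+1/2})}}F'(\tilde\phi_N^{n+1/2})-\frac{r^{n+1/2}}{\sqrt{E_1(\phi^{n+1/2})}}F'(\phi^{n+1/2}).
\end{equation*}
Here Lemma~\ref{lem: uniform boundedness} is indispensable: it gives $\|\phi_N^n\|_{H^2}\le C$, hence by Sobolev embedding $\|\phi_N^n\|_{L^\infty}\le C$, which bounds $F'(\phi_N^n)$, $F''(\phi_N^n)$ and $E_1(\phi_N^n)$ uniformly and gives Lipschitz-type estimates $\|F'(\tilde\phi_N^{n+1/2})-F'(\phi^{n+1/2})\|\le C\|\tilde\phi_N^{n+1/2}-\phi^{n+1/2}\|$. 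Splitting $\tilde\phi_N^{n+1/2}-\phi^{n+1/2}=\tilde{\bar e}_\phi^{n+1/2}+\tilde{\check e}_\phi^{n+1/2}+(\tilde\phi^{n+1/2}-\phi^{n+1/2})$, each piece is bounded either by the current errors $\bar e_\phi^n,\bar e_\phi^{n-1}$ (which Gronwall will absorb), by projection errors $\le C N^{-m}$, or by the $O(\Delta t^2)$ extrapolation error. The same decomposition handles $R^{n+1/2}-r^{n+1/2}$. After squaring and summing, summing the truncation errors from $n=0$ to $k-1$ converts each $O(\Delta t^{3/2})$ per step into the claimed $\Delta t^4\int_0^{t^k}(\cdots)ds$ bound.

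Finally I invoke the discrete Gronwall lemma on the accumulated inequality, adding the projection-error contribution $\|\check e_\phi^k\|^2\le C\|\phi\|_{H^{m+1}}^2 N^{-2m}$ at the end to pass from $\bar e_\phi$ back to $e_\phi=\phi_N^k-\phi^k$. The main obstacle I expect is controlling the SAV nonlinear residual uniformly in $n$: one must carefully use the telescoping structure of the $r$-equation so that the $R^{n+1/2}/\sqrt{E_1(\tilde\phi_N^{n+1/2})}-1$ factor does not spoil the Gronwall closure, and show that the coefficient of $\|\bar e_\phi^n\|^2+\|\bar e_\phi^{n-1}\|^2$ emerging from the Lipschitz bound can be absorbed using only the dissipation $M\Delta t\|\nabla\bar e_\mu^{n+1/2}\|^2$ and the $\mathcal E$ terms on the left — without needing a smallness condition on $\Delta t$, since the uniform $H^2$ bound makes every Lipschitz constant $\Delta t$-independent.
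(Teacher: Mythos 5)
Your proposal follows essentially the same route as the paper's proof: the same $L^2$-projection splitting and test-function choices mirroring the stability argument, the same Taylor-remainder truncation errors measured in $(-\Delta)^{-1/2}$ and $H^1$/$H^3$ norms, the same use of the uniform $H^2$ bound from Lemma \ref{lem: uniform boundedness} together with $H^2\hookrightarrow L^\infty$ to get Lipschitz control of the SAV nonlinearity, and the same substitution of $\bar e_\phi^{n+1}-\bar e_\phi^n$ via the discrete $\phi$-equation so the dissipation $M\Delta t\|\nabla\bar e_\mu^{n+1/2}\|^2$ absorbs the residuals, concluding with discrete Gronwall and the projection error. The plan is correct and complete in its essential ideas.
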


\begin{proof}
Subtracting \eqref{e_model_recast} from \eqref{e_fully second order discrete} at $t^{n+1/2}$, we find
\begin{equation}\label{e_error1}
\aligned
(\bar{e}_{\phi}^{n+1}-\bar{e}_{\phi}^n,q)+M\Delta t(\nabla \bar{e}_{\mu}^{n+1/2},\nabla q)
=(Q_1^{n+1/2},q),
\endaligned
\end{equation}

\begin{equation}\label{e_error2}
\aligned
&(\bar{e}_{\mu}^{n+1/2},\Psi)=\left((\Delta+\beta)\bar{e}_{\phi}^{n+1/2}, (\Delta+\beta)\Psi\right)+\lambda (\bar{e}_{\phi}^{n+1/2},\Psi)\\
&\ \ \ \ \ \ +\frac{R^{n+1/2}}{\sqrt{E_1(\tilde{\phi}_N^{n+1/2})}}(F^{\prime}(\tilde{\phi}_N^{n+1/2}),\Psi)-\frac{r^{n+1/2}}{\sqrt{E_1(\phi^{n+1/2})}}(F^{\prime}(\phi^{n+1/2}),\Psi)\\
&\ \ \ \ \ \ +S\left(\nabla(\bar{e}_{\phi}^{n+1}-2\bar{e}_{\phi}^n+\bar{e}_{\phi}^{n-1}), \nabla \Psi\right)+S\left(\nabla(\phi^{n+1}-2\phi^n+\phi^{n-1}), \nabla \Psi\right),
\endaligned
\end{equation}

\begin{equation}\label{e_error3}
\aligned
&e_r^{n+1}-e_r^n=\frac{1}{2\sqrt{E_1(\tilde{\phi}_N^{n+1/2})}}(F^{\prime}(\tilde{\phi}_N^{n+1/2}),\phi_N^{n+1}-\phi_N^n)\\
&\ \ \ \ \ \ -\frac{1}{2\sqrt{E_1(\phi^{n+1/2})}}\int_{\Omega}F^{\prime}(\phi^{n+1/2})\phi_t^{n+1/2}\Delta t d\textbf{x}+Q_2^{n+1/2},
\endaligned
\end{equation}
where the truncation errors are give by the Taylor expansion:
\begin{equation}\label{e_truncation_S1}
\aligned
Q_1^{n+1/2}=&\Delta t\frac{\partial \phi^{n+1/2}}{\partial t}-(\phi^{n+1}-\phi^{n})\\
=&\frac{1}{2}\int_{t^{n}}^{t^{n+1/2}}(t^{n}-s)^2\frac{\partial^3 \phi}{\partial t^3}(s)ds+
\frac{1}{2}\int_{t^{n+1}}^{t^{n+1/2}}(t^{n+1}-s)^2\frac{\partial^3 \phi}{\partial t^3}(s)ds.
\endaligned
\end{equation}

\begin{equation}\label{e_truncation_S2}
\aligned
Q_2^{n+1/2}=&\Delta tr_t^{n+1/2}-(r^{n+1}-r^n)\\
=&\frac{1}{2}\int_{t^{n}}^{t^{n+1/2}}(t^{n}-s)^2\frac{d^3 r}{d t^3}(s)ds+
\frac{1}{2}\int_{t^{n+1}}^{t^{n+1/2}}(t^{n+1}-s)^2\frac{d^3 r}{d t^3}(s)ds.
\endaligned
\end{equation}
Taking $q=\bar{e}_{\mu}^{n+1/2}$ and $\Psi=\bar{e}_{\phi}^{n+1}-\bar{e}_{\phi}^n$ in \eqref{e_error1} and  \eqref{e_error2} respectively, and multiplying \eqref{e_error3} with $2e_r^{n+1/2}$ yield
\begin{equation}\label{e_error4}
\aligned
(\bar{e}_{\phi}^{n+1}-\bar{e}_{\phi}^n,\bar{e}_{\mu}^{n+1/2})+M\Delta t\|\nabla \bar{e}_{\mu}^{n+1/2}\|^2
=(Q_1^{n+1/2},\bar{e}_{\mu}^{n+1/2}).
\endaligned
\end{equation}

\begin{equation}\label{e_error5}
\aligned
&(\bar{e}_{\mu}^{n+1/2},\bar{e}_{\phi}^{n+1}-\bar{e}_{\phi}^n)=\frac{1}{2}(\|(\Delta+\beta)\bar{e}_{\phi}^{n+1}\|^2-\|(\Delta+\beta)\bar{e}_{\phi}^n\|^2)+\frac{\lambda}{2}(\|\bar{e}_{\phi}^{n+1}\|^2-\|\bar{e}_{\phi}^n\|^2)\\
&\ \ \  +\frac{R^{n+1/2}}{\sqrt{E_1(\tilde{\phi}_N^{n+1/2})}}(F^{\prime}(\tilde{\phi}_N^{n+1/2}),\bar{e}_{\phi}^{n+1}-\bar{e}_{\phi}^n)-\frac{r^{n+1/2}}{\sqrt{E_1(\phi^{n+1/2})}}(F^{\prime}(\phi^{n+1/2}),\bar{e}_{\phi}^{n+1}-\bar{e}_{\phi}^n)\\
&\ \ \ +S\left(\nabla(\bar{e}_{\phi}^{n+1}-2\bar{e}_{\phi}^n+\bar{e}_{\phi}^{n-1}), \nabla (\bar{e}_{\phi}^{n+1}-\bar{e}_{\phi}^n)\right)\\
&\ \ \ +S\left(\nabla(\phi^{n+1}-2\phi^n+\phi^{n-1}), \nabla (\bar{e}_{\phi}^{n+1}-\bar{e}_{\phi}^n)\right).
\endaligned
\end{equation}

\begin{equation}\label{e_error6}
\aligned
&(e_r^{n+1})^2-(e_r^n)^2=\frac{e_r^{n+1/2}}{\sqrt{E_1(\tilde{\phi}_N^{n+1/2})}}(F^{\prime}(\tilde{\phi}_N^{n+1/2}),\phi_N^{n+1}-\phi_N^n)\\
&\ \ \ \ \ \ -\frac{e_r^{n+1/2}}{\sqrt{E_1(\phi^{n+1/2})}}\int_{\Omega}F^{\prime}(\phi^{n+1/2})\phi_t^{n+1/2}\Delta t d\textbf{x}\\
&\ \ \ \ \ \ +2(Q_2^{n+1/2},e_r^{n+1/2}).
\endaligned
\end{equation}

The term on the right-hand side of \eqref{e_error4} can be estimated by
\begin{equation}\label{e_error7}
\aligned
(Q_1^{n+1/2},\bar{e}_{\mu}^{n+1/2})\leq &\frac{M\Delta t}{6}\|\nabla\bar{e}_{\mu}^{n+1/2}\|^2
+\frac{C}{\Delta t}\|(-\Delta)^{-1/2}Q_1^{n+1/2}\|^2\\
\leq &\frac{M\Delta t}{6}\|\nabla\bar{e}_{\mu}^{n+1/2}\|^2
+C\Delta t^4\int_{t^n}^{t^{n+1}}\|(-\Delta)^{-1/2}\frac{\partial^3 \phi}{\partial t^3}(s)\|^2ds,
\endaligned
\end{equation}
where taking notice of $(Q_1^{n+1/2},1)=0$, the operator $(-\Delta)^{-1/2}$, which is the 
power of $-\Delta$, can be well defined by the spectral theory of self-adjoint operators.

The third and fourth terms on the right-hand side of \eqref{e_error5} can be transformed into
\begin{equation}\label{e_error8}
\aligned
&\frac{R^{n+1/2}}{\sqrt{E_1(\tilde{\phi}_N^{n+1/2})}}(F^{\prime}(\tilde{\phi}_N^{n+1/2}),\bar{e}_{\phi}^{n+1}-\bar{e}_{\phi}^n)-\frac{r^{n+1/2}}{\sqrt{E_1(\phi^{n+1/2})}}(F^{\prime}(\phi^{n+1/2}),\bar{e}_{\phi}^{n+1}-\bar{e}_{\phi}^n)\\
=&\frac{e_r^{n+1/2}}{\sqrt{E_1(\tilde{\phi}_N^{n+1/2})}}(F^{\prime}(\tilde{\phi}_N^{n+1/2}),\bar{e}_{\phi}^{n+1}-\bar{e}_{\phi}^n)+r^{n+1/2}(\frac{F^{\prime}(\tilde{\phi}_N^{n+1/2})}{\sqrt{E_1(\tilde{\phi}_N^{n+1/2})}},\bar{e}_{\phi}^{n+1}-\bar{e}_{\phi}^n)
)\\
&-r^{n+1/2}(\frac{F^{\prime}(\phi^{n+1/2})}{\sqrt{E_1(\phi^{n+1/2})}},\bar{e}_{\phi}^{n+1}-\bar{e}_{\phi}^n).
\endaligned
\end{equation}

Assuming $F(\phi)\in C^3(\mathbb{R})$ and noting \eqref{e_projection_error} and \eqref{e_projection_commute}, the last two terms on the right-hand side of \eqref{e_error8} can be controlled, similar to the estimates in \cite{shen2018convergence}, by
\begin{equation}\label{e_error9}
\aligned
&r^{n+1/2}(\frac{F^{\prime}(\tilde{\phi}_N^{n+1/2})}{\sqrt{E_1(\tilde{\phi}_N^{n+1/2})}},\bar{e}_{\phi}^{n+1}-\bar{e}_{\phi}^n))
-r^{n+1/2}(\frac{F^{\prime}(\phi^{n+1/2})}{\sqrt{E_1(\phi^{n+1/2})}},\bar{e}_{\phi}^{n+1}-\bar{e}_{\phi}^n)\\
= &r^{n+1/2}M\Delta t(\frac{F^{\prime}(\tilde{\phi}_N^{n+1/2})}{\sqrt{E_1(\tilde{\phi}_N^{n+1/2})}}-
\frac{F^{\prime}(\phi^{n+1/2})}{\sqrt{E_1(\phi^{n+1/2})}},\Delta \bar{e}_{\mu}^{n+1/2})\\
&+r^{n+1/2}(\frac{F^{\prime}(\tilde{\phi}_N^{n+1/2})}{\sqrt{E_1(\tilde{\phi}_N^{n+1/2})}}-
\frac{F^{\prime}(\phi^{n+1/2})}{\sqrt{E_1(\phi^{n+1/2})}},Q_1^{n+1/2})\\
\leq &\frac{M\Delta t}{6}\|\nabla\bar{e}_{\mu}^{n+1/2}\|^2+C\Delta t\|\frac{\nabla F^{\prime}(\tilde{\phi}_N^{n+1/2})}{\sqrt{E_1(\tilde{\phi}_N^{n+1/2})}}-
\frac{\nabla F^{\prime}(\phi^{n+1/2})}{\sqrt{E_1(\phi^{n+1/2})}}\|^2\\
&+\frac{C}{\Delta t}\|(-\Delta)^{-1/2}Q_1^{n+1/2}\|^2\\
\leq &\frac{M\Delta t}{6}\|\nabla\bar{e}_{\mu}^{n+1/2}\|^2+C\Delta t(\|\bar{e}_{\phi}^{n}\|^2
+\|\bar{e}_{\phi}^{n-1}\|^2+\|\nabla\bar{e}_{\phi}^{n}\|^2+\|\nabla\bar{e}_{\phi}^{n-1}\|^2)\\
&+C\Delta t^4\int_{t^n}^{t^{n+1}}\|(-\Delta)^{-1/2}\frac{\partial^3 \phi}{\partial t^3}(s)\|^2ds+C\Delta t^4\int_{t^n}^{t^{n+1}}\|\frac{\partial^2 \phi}{\partial t^2}(s)\|_{H^1}^2ds\\
&+C\|\tilde{\phi}^{n+1/2}\|_{m+1}^2N^{-2m}\Delta t,
\endaligned
\end{equation}
where the last inequality holds by the fact that one can find a constant $C$ such that $|F^{\prime}(\phi_N^n)|\le C$, $|F^{\prime\prime}(\phi_N^n)|\leq C$ by using Lemma \ref{lem: uniform boundedness} and the Sobolev embedding theorem $H^2\subseteq L^{\infty}$.

We also have the following using the integration by parts and Cauchy-Schwartz inequality:
\begin{equation}\label{e_error_nabla_estimate}
\aligned
&\|\nabla\bar{e}_{\phi}^{n}\|^2=-(\bar{e}_{\phi}^{n},\Delta \bar{e}_{\phi}^{n})
\leq C(\|\bar{e}_{\phi}^{n}\|^2+\|\Delta\bar{e}_{\phi}^{n}\|^2)
\leq C(\|\bar{e}_{\phi}^{n}\|^2+\|(\Delta+\beta)\bar{e}_{\phi}^{n}\|^2).
\endaligned
\end{equation}

Similar to the estimate in \eqref{e_error9}, the last term on the right-hand side of \eqref{e_error5} can be directly controlled by using 
integration by parts and Cauchy-Schwarz inequality:
\begin{equation}\label{e_error10}
\aligned
&S\left(\nabla(\phi^{n+1}-2\phi^n+\phi^{n-1}), \nabla (\bar{e}_{\phi}^{n+1}-\bar{e}_{\phi}^n)\right)\\
=&-S\left(\Delta(\phi^{n+1}-2\phi^n+\phi^{n-1}), M\Delta t\Delta \bar{e}_{\mu}^{n+1/2}+Q_1^{n+1/2}\right)\\
\leq&\frac{M\Delta t}{6}\|\nabla\bar{e}_{\mu}^{n+1/2}\|^2+C\Delta t^4\int_{t^n}^{t^{n+1}}\|\frac{\partial^2 \phi}{\partial t^2}(s)\|_{H^3}^2ds\\
&+C\Delta t^4\int_{t^n}^{t^{n+1}}\|(-\Delta)^{-1/2}\frac{\partial^3 \phi}{\partial t^3}(s)\|^2ds.
\endaligned
\end{equation}
The first two terms on the right-hand side of \eqref{e_error6} can be recast as
\begin{equation}\label{e_error11}
\aligned
&\frac{e_r^{n+1/2}}{\sqrt{E_1(\tilde{\phi}_N^{n+1/2})}}(F^{\prime}(\tilde{\phi}_N^{n+1/2}),\phi_N^{n+1}-\phi_N^n)-\frac{e_r^{n+1/2}}{\sqrt{E_1(\phi^{n+1/2})}}\int_{\Omega}F^{\prime}(\phi^{n+1/2})\phi_t^{n+1/2}\Delta t d\textbf{x}\\
=&\frac{e_r^{n+1/2}}{\sqrt{E_1(\tilde{\phi}_N^{n+1/2})}}(F^{\prime}(\tilde{\phi}_N^{n+1/2}),\bar{e}_{\phi}^{n+1}-\bar{e}_{\phi}^n)+\frac{e_r^{n+1/2}}{\sqrt{E_1(\tilde{\phi}_N^{n+1/2})}}(F^{\prime}(\tilde{\phi}_N^{n+1/2}),\breve{e}_{\phi}^{n+1}-\breve{e}_{\phi}^n)\\
&+e_r^{n+1/2}(\frac{F^{\prime}(\tilde{\phi}_N^{n+1/2})}{\sqrt{E_1(\tilde{\phi}_N^{n+1/2})}}-
\frac{F^{\prime}(\phi^{n+1/2})}{\sqrt{E_1(\phi^{n+1/2})}},\phi^{n+1}-\phi^n)\\
&-e_r^{n+1/2}(\frac{F^{\prime}(\phi^{n+1/2})}{\sqrt{E_1(\phi^{n+1/2})}},Q_1^{n+1/2}).
\endaligned
\end{equation}
The last two terms on the right-hand side of \eqref{e_error11} can be handled in a similar way as \eqref{e_error9}:
\begin{equation}\label{e_error12}
\aligned
&e_r^{n+1/2}(\frac{F^{\prime}(\tilde{\phi}_N^{n+1/2})}{\sqrt{E_1(\tilde{\phi}_N^{n+1/2})}}-
\frac{F^{\prime}(\phi^{n+1/2})}{\sqrt{E_1(\phi^{n+1/2})}},\phi^{n+1}-\phi^n)\\
\leq &C\Delta t\|\phi_t\|_{L^{\infty}(J;H^{-1})}\left((e_r^{n+1})^2+(e_r^{n})^2\right.+\|\bar{e}_{\phi}^{n}\|^2
+\|\bar{e}_{\phi}^{n-1}\|^2+\|\nabla\bar{e}_{\phi}^{n}\|^2+\|\nabla\bar{e}_{\phi}^{n-1}\|^2)\\
&+C\Delta t^4\int_{t^n}^{t^{n+1}}\|\frac{\partial^2 \phi}{\partial t^2}(s)\|_{H^1}^2ds+C\|\tilde{\phi}^{n+1/2}\|_{H^{m+1}}^2N^{-2m}\Delta t.
\endaligned
\end{equation}

\begin{equation}\label{e_error13}
\aligned
&-e_r^{n+1/2}(\frac{F^{\prime}(\phi^{n+1/2})}{\sqrt{E_1(\phi^{n+1/2})}},Q_1^{n+1/2})
\leq C\Delta t(e_r^{n+1/2})^2\\
& \ \ \ \ \ +C\Delta t^4\int_{t^n}^{t^{n+1}}\|(-\Delta)^{-1/2}\frac{\partial^3 \phi}{\partial t^3}(s)\|^2ds.
\endaligned
\end{equation}
The last term on the right-hand side of \eqref{e_error6} can be controlled by
\begin{equation}\label{e_error6_add}
\aligned
2(Q_2^{n+1/2},e_r^{n+1/2})\leq C\Delta t(e_r^{n+1/2})^2+C\Delta t^4
\int_{t^{n}}^{t^{n+1}}|\frac{d^3 r}{d t^3}(s)|^2ds.
\endaligned
\end{equation}

By combining the above equations, we can obtain 
\begin{equation*}\label{e_error14}
\aligned
&M\Delta t\|\nabla \bar{e}_{\mu}^{n+1/2}\|^2+\frac{1}{2}(\|(\Delta+\beta)\bar{e}_{\phi}^{n+1}\|^2-\|(\Delta+\beta)\bar{e}_{\phi}^n\|^2)+\frac{\lambda}{2}(\|\bar{e}_{\phi}^{n+1}\|^2-\|\bar{e}_{\phi}^n\|^2)\\
&+S(\|\nabla(\bar{e}_{\phi}^{n+1}-\bar{e}_{\phi}^{n})\|^2-\|\nabla(\bar{e}_{\phi}^{n}-\bar{e}_{\phi}^{n-1})\|^2)+(e_r^{n+1})^2-(e_r^n)^2\\
\leq&\frac{M\Delta t}{2}\|\nabla\bar{e}_{\mu}^{n+1/2}\|^2
+C\Delta t(\|\bar{e}_{\phi}^{n}\|^2
+\|\bar{e}_{\phi}^{n-1}\|^2+\|(\Delta+\beta)\bar{e}_{\phi}^{n}\|^2+\|(\Delta+\beta)\bar{e}_{\phi}^{n-1}\|^2)\\
&+C\Delta t\|\phi_t\|_{L^{\infty}(J;H^{-1})}\left((e_r^{n+1})^2+(e_r^{n})^2\right)+C\|\tilde{\phi}^{n+1/2}\|_{m+1}^2N^{-2m}\Delta t\\
&+C\Delta t^4\int_{t^n}^{t^{n+1}}\|(-\Delta)^{-1/2}\frac{\partial^3 \phi}{\partial t^3}(s)\|^2ds+C\Delta t^4\int_{t^n}^{t^{n+1}}\|\frac{\partial^2 \phi}{\partial t^2}(s)\|_{H^1}^2ds\\
&+C\Delta t^4\int_{t^n}^{t^{n+1}}\|\frac{\partial^2 \phi}{\partial t^2}(s)\|_{H^3}^2ds+C\Delta t^4\int_{t^{n}}^{t^{n+1}}(\frac{d^3 r}{d t^3}(s))^2ds
\endaligned
\end{equation*}
Then 
summing over $n$, $n=0,1,\ldots,k-1$, and using Gronwall's inequality, we have
\begin{equation}\label{e_error15}
\aligned
&\sum\limits_{n=0}^{k-1}\Delta t\|\nabla \bar{e}_{\mu}^{n+1/2}\|^2+\|(\Delta+\beta)\bar{e}_{\phi}^{k}\|^2+\|\bar{e}_{\phi}^{k}\|^2+(e_r^{k})^2\\
\leq&C\Delta t^4\int_0^{t^k}(
\|(-\Delta)^{-1/2}\frac{\partial^3 \phi}{\partial t^3}\|^2+\|\frac{\partial^2 \phi}{\partial t^2}\|_{H^1}^2+\|\frac{\partial^2 \phi}{\partial t^2}\|_{H^3}^2+|\frac{d^3 r}{d t^3}|^2)ds\\
&+C\|\phi\|_{L^{\infty}(J;H^{m+1}_{per}(\Omega))}^2N^{-2m},
\endaligned
\end{equation}
By the triangle inequality and \eqref{e_projection_error}, we obtain the desired results \eqref{e_error_result}.
\end{proof}

\begin{remark}
 We observe that with $S>0$, i.e., with an active  stabilization term, we  need to assume additionally 
  $\frac{\partial^2 \phi}{\partial t^2}\in L^{2}(0,T; H^{3}_{per}(\Omega))$. But this additional  regularity requirement might be formally derived from the assumption   $\frac{\partial^3 \phi}{\partial t^3}\in L^{2}(0,T; H^{-1}(\Omega))$ since one time derivative is formally equivalent to six spatial derivatives. 
\end{remark}
\section{Numerical results}
We present in this section several numerical examples to verify the accuracy of our S-SAV Fourier-spectral schemes and to illustrate the phase transition behaviors and crystal growth in a supercooled liquid. In the following simulations, we take $M=1$, $\beta=1$.

\subsection{Accuracy tests} 
In this subsection, we test the accuracy of the first and second order fully discrete schemes \eqref{e_fully first order discrete} and \eqref{e_fully second order discrete}, respectively. In this test, we take $\Omega=(0,32)\times (0,32)$, $T=1$, $S=5$, $\lambda=0.01$, $\epsilon=0.025$ and the initial condition $\phi_0=\sin(\pi x/16)\cos(\pi y/16)$. We measure Cauchy error since we do not have possession of exact solution. Specifically, the error between two different grid spacings $N$ and $2N$ is calculated by $\|e_{\phi}\|=\|\phi_{N}-\phi_{2N}\|$ and we use the similar procedure to compute the error between two different time steps $\Delta t$ and $\Delta t/2$. 

We first check the time accuracy by taking  $N=256$ so the spatial error is negligible. In Tables \ref{table1_example1} and \ref{table2_example1}, we  show the order of convergence in the temporal direction.  To test the spectral accuracy, we choose the time step to be sufficiently small so that the error is dominated by the spatial error, and compute the error at $T=1$, the results are plotted in \ref{fig: spectral accuracy} which shows that the error converges exponentially. These numerical results  are consistent with the error estimates in Theorem \ref{thm: error estimate}.

\begin{table}[htbp]
\renewcommand{\arraystretch}{1.1}
\small
\centering
\caption{Errors and convergence rates in time for the first order scheme  \eqref{e_fully first order discrete}.}\label{table1_example1}
\begin{tabular}{p{1.2cm}p{2.5cm}p{1.2cm}p{1.8cm}p{1.0cm}}\hline
$\Delta t$    &$\|e_{\phi}\|_{L^{\infty}(J;L^2(\Omega))}$    &Rate &$\|e_r\|_{L^{\infty}(J)}$   &Rate   \\ \hline
$1/5$    &1.26E-1                & ---    &9.08E-2         &----\\
$1/10$    &6.77E-2                &0.90    &4.64E-2         &0.97 \\
$1/20$    &3.60E-2                &0.91     &2.33E-2         &0.99 \\
$1/40$    &1.88E-2                &0.94    &1.17E-2         &1.00   \\
$1/80$    &9.67E-3                &0.96    &5.84E-3         &1.00   \\
\hline
\end{tabular}
\end{table}

\begin{table}[htbp]
\renewcommand{\arraystretch}{1.1}
\small
\centering
\caption{Errors and convergence rates in time for the second order scheme  \eqref{e_fully second order discrete}.}\label{table2_example1}
\begin{tabular}{p{1.2cm}p{2.5cm}p{1.2cm}p{1.8cm}p{1.0cm}}\hline
$\Delta t$    &$\|e_{\phi}\|_{L^{\infty}(J;L^2(\Omega))}$    &Rate &$\|e_r\|_{L^{\infty}(J)}$   &Rate   \\ \hline
$1/5$    &4.06E-2                & ---    &7.02E-3         &---\\
$1/10$    &1.02E-2                &1.99    &1.79E-3         &1.97 \\
$1/20$    &2.12E-3                &2.26     &4.41E-4         &2.02 \\
$1/40$    &4.99E-4                &2.09    &1.11E-4         &1.99   \\
$1/80$    &1.22E-4                &2.03    &2.78E-5         &1.99   \\
\hline
\end{tabular}
\end{table}

\begin{figure}[!htp]
\centering
\includegraphics[scale=0.40]{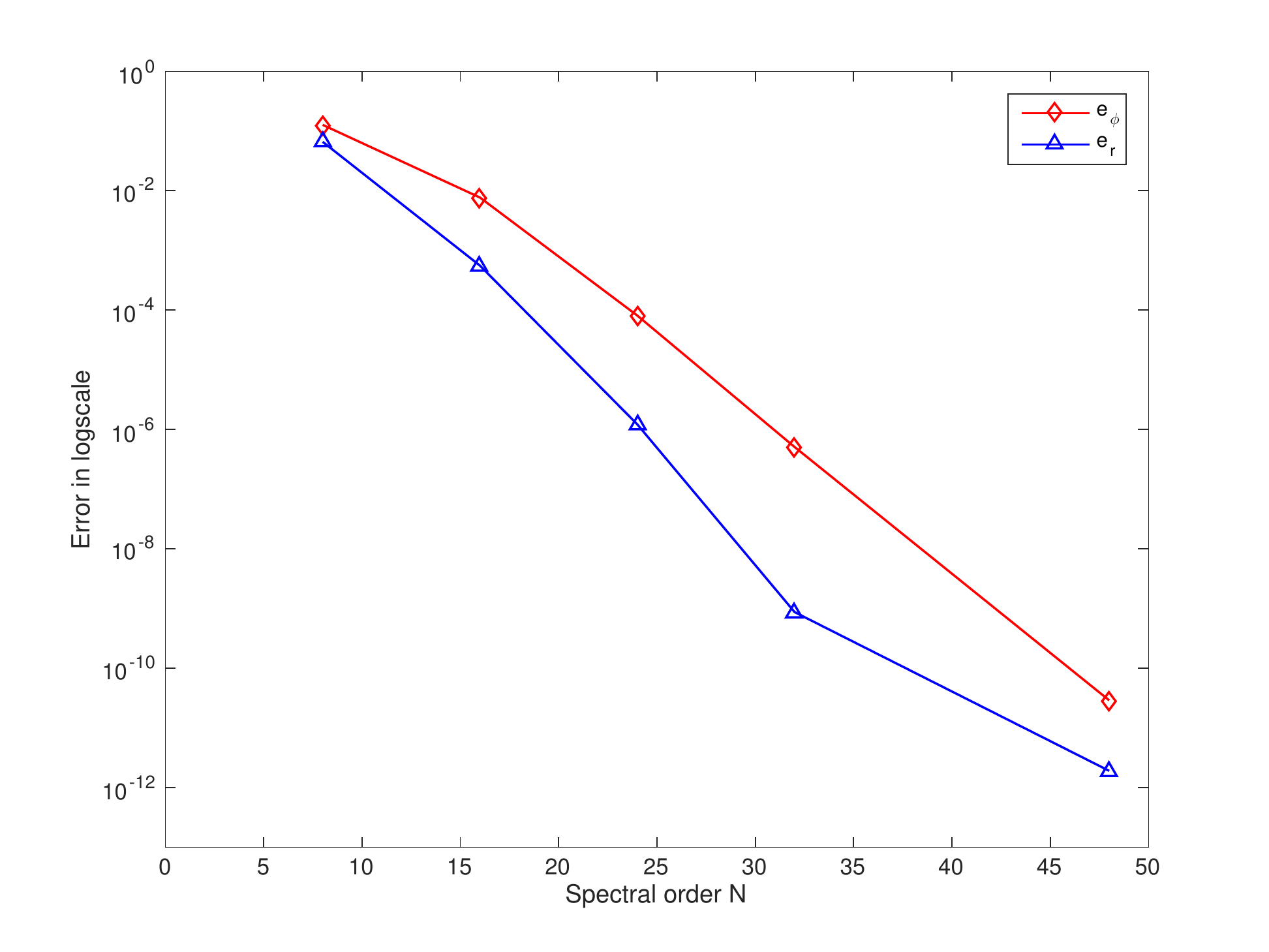}
\caption{Spatial $L^2$ errors at time $T=1$ for the second order scheme \eqref{e_fully second order discrete}}  \label{fig: spectral accuracy}
\end{figure}

\subsection{Phase evolution behaviors and the effect of stabilization} 
In this subsection, we simulate the phase evolution behavior of the PFC model. The physical parameters are set as $\Omega=(0,128)\times (0,128)$ with the random initial data $\phi_{i,j}=\phi_0+\eta_{i,j}$, where $\phi_0=0.06$ and $\eta_{i,j}$ is a uniformly distributed random number satisfying $|\eta_{i,j}|\leq 0.01$. The other parameters are $\epsilon=0.025$,  $\lambda=0.001$.

 In Figure \ref{fig: energy decay}, we present the time evolution of the energy  with stabilization $S=0.01$ and without stabilization. It can be observed that the energy decreases at all times, which indicates numerical evidence for our method being unconditionally energy stable. But the modified SAV energy at large $\Delta t$ is not consistent with the original energy without stabilization, it only becomes close to the original energy with $\Delta t=0.02$. However, with the stabilization parameter $S=0.01$, even the result with $\Delta t=1$ produces reasonably accurate results. This example shows that while the stabilization is not needed for stability, it is essential for accuracy at larger time steps.
 So in the following simulations, we always add a stabilized term so that reasonable accuracy can be achieved  without using exceedingly small time steps.

\begin{figure}[!htp]
\centering
\includegraphics[scale=0.40]{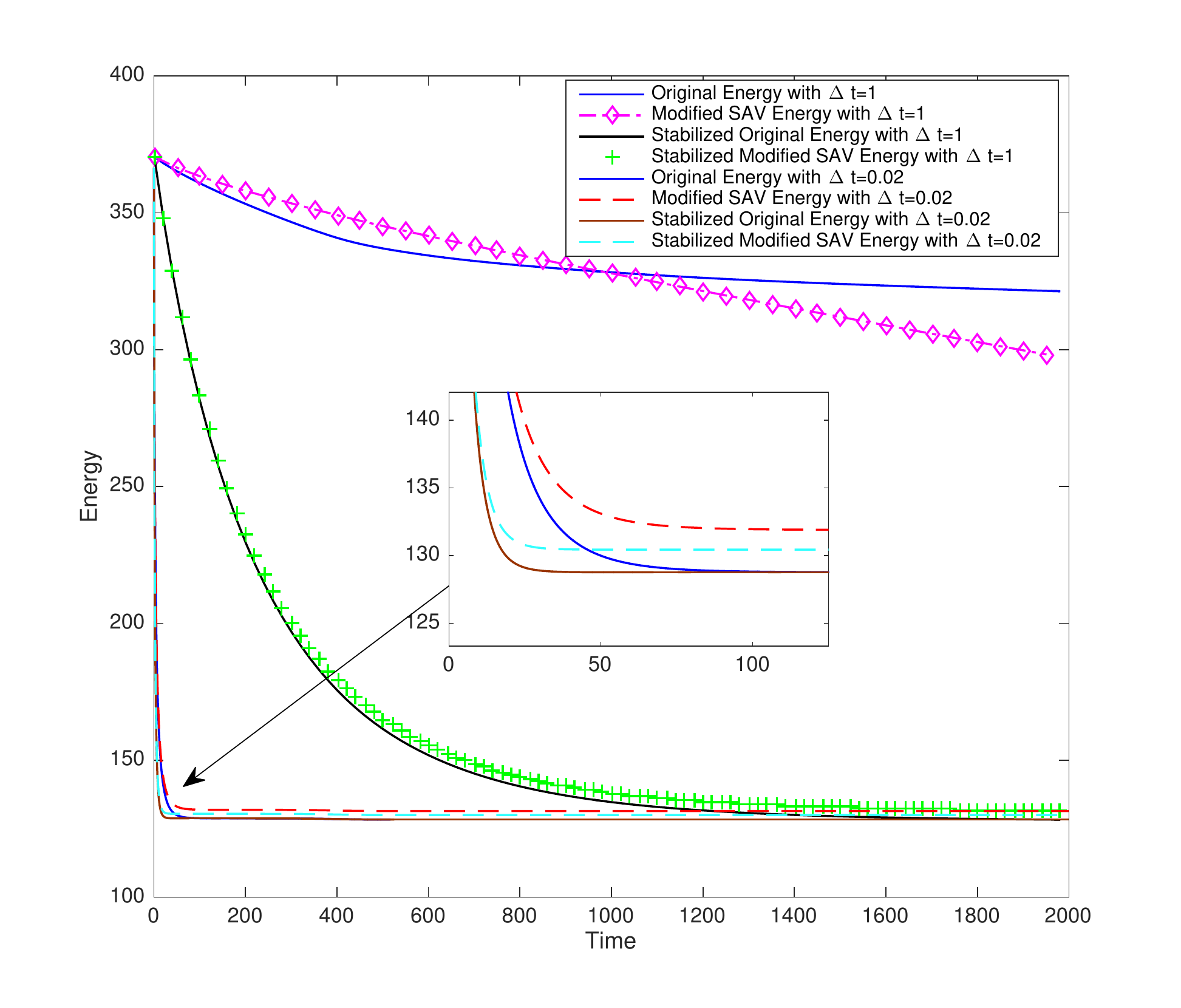}
\caption{Time evolution of the free energy functional.}  \label{fig: energy decay}
\end{figure}

We present the evolution of the density field $\phi$ calculated using the second order scheme \eqref{e_fully second order discrete} with $\Delta t=1$, $S=0.01$ and $N=256$ in Figure \ref{fig: long time simulation}. 
\begin{figure}[htbp]
\centering
\includegraphics[scale=0.21]{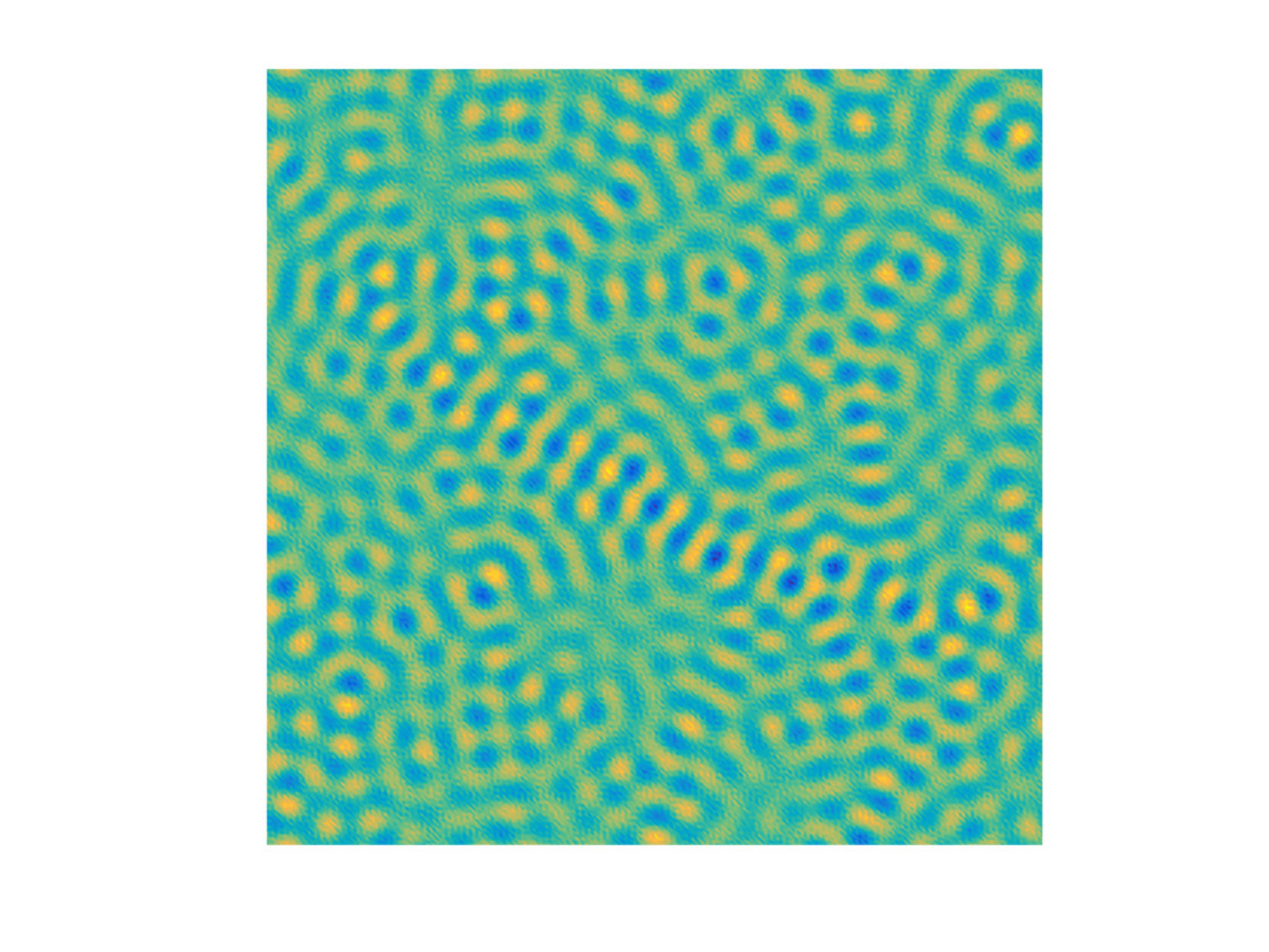}
\includegraphics[scale=0.21]{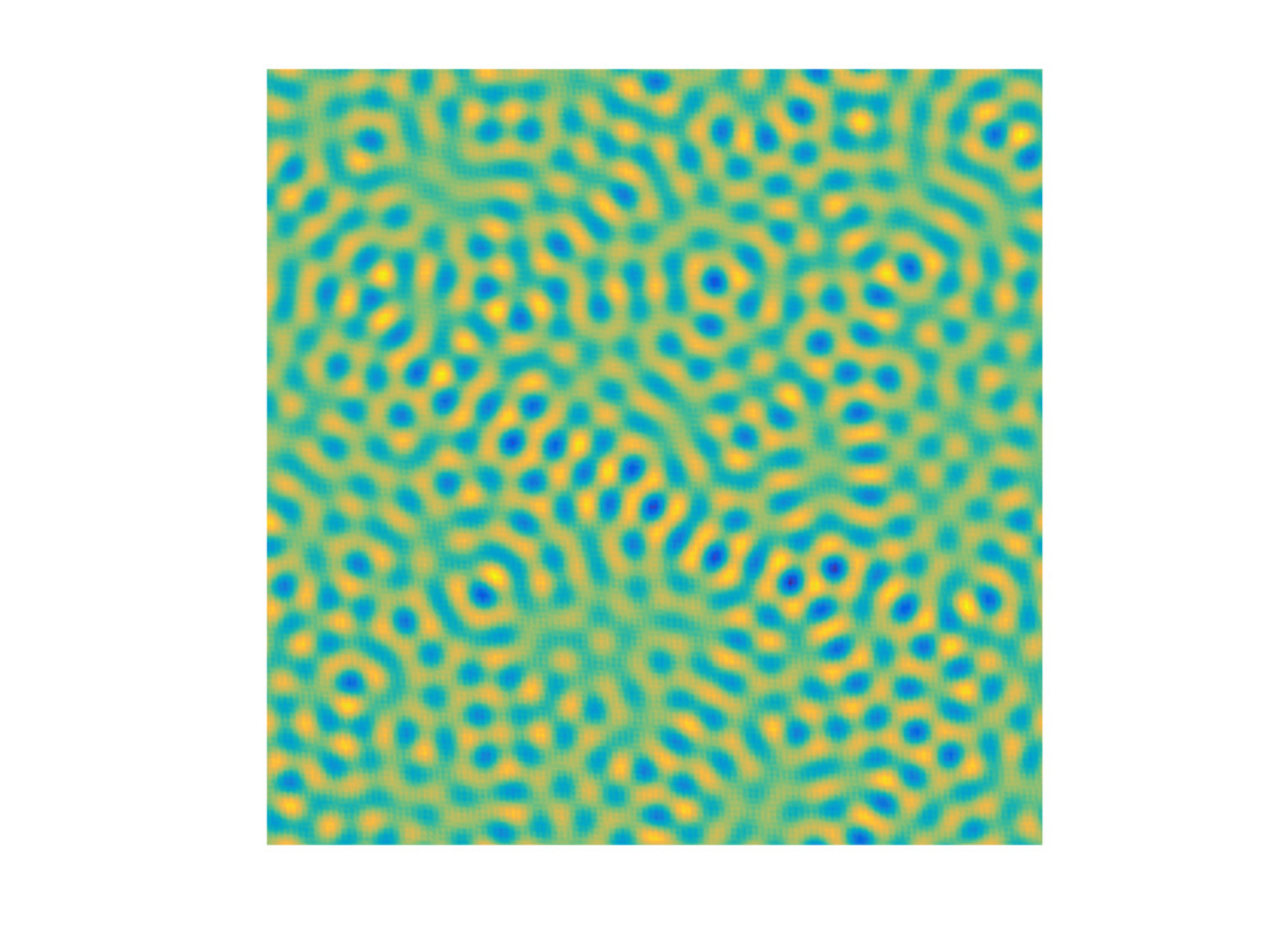}
\includegraphics[scale=0.21]{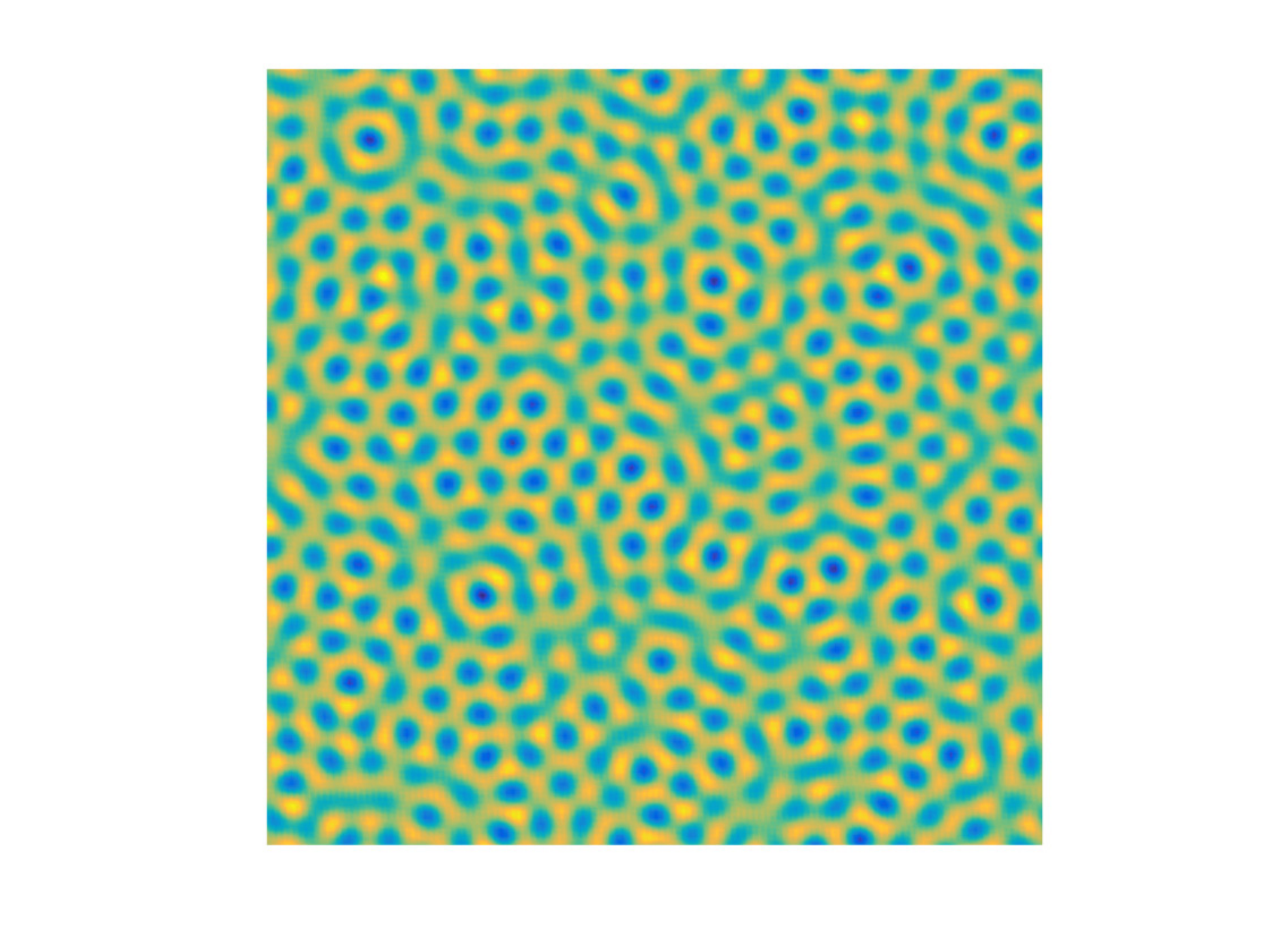}
\includegraphics[scale=0.21]{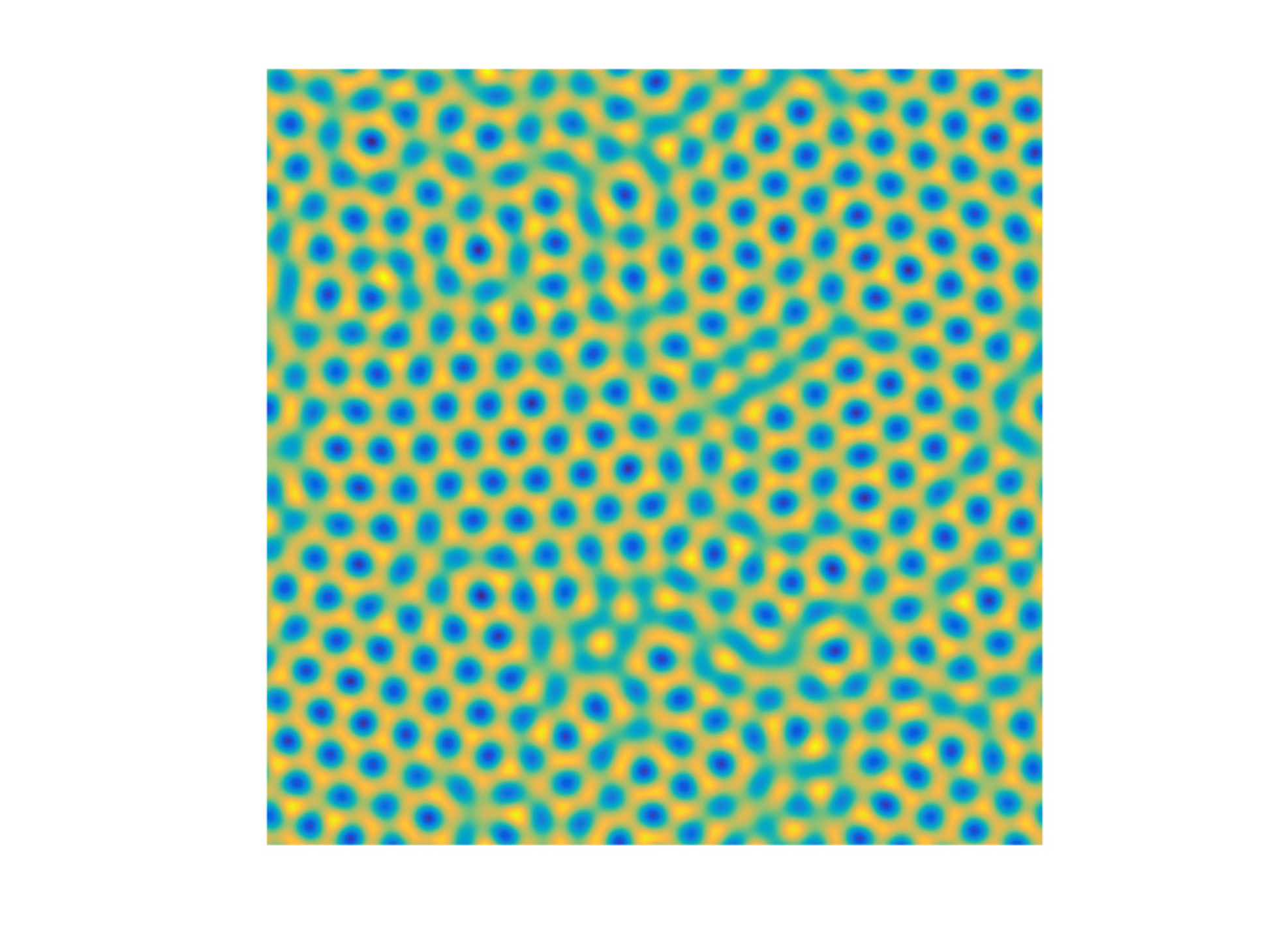}
\includegraphics[scale=0.21]{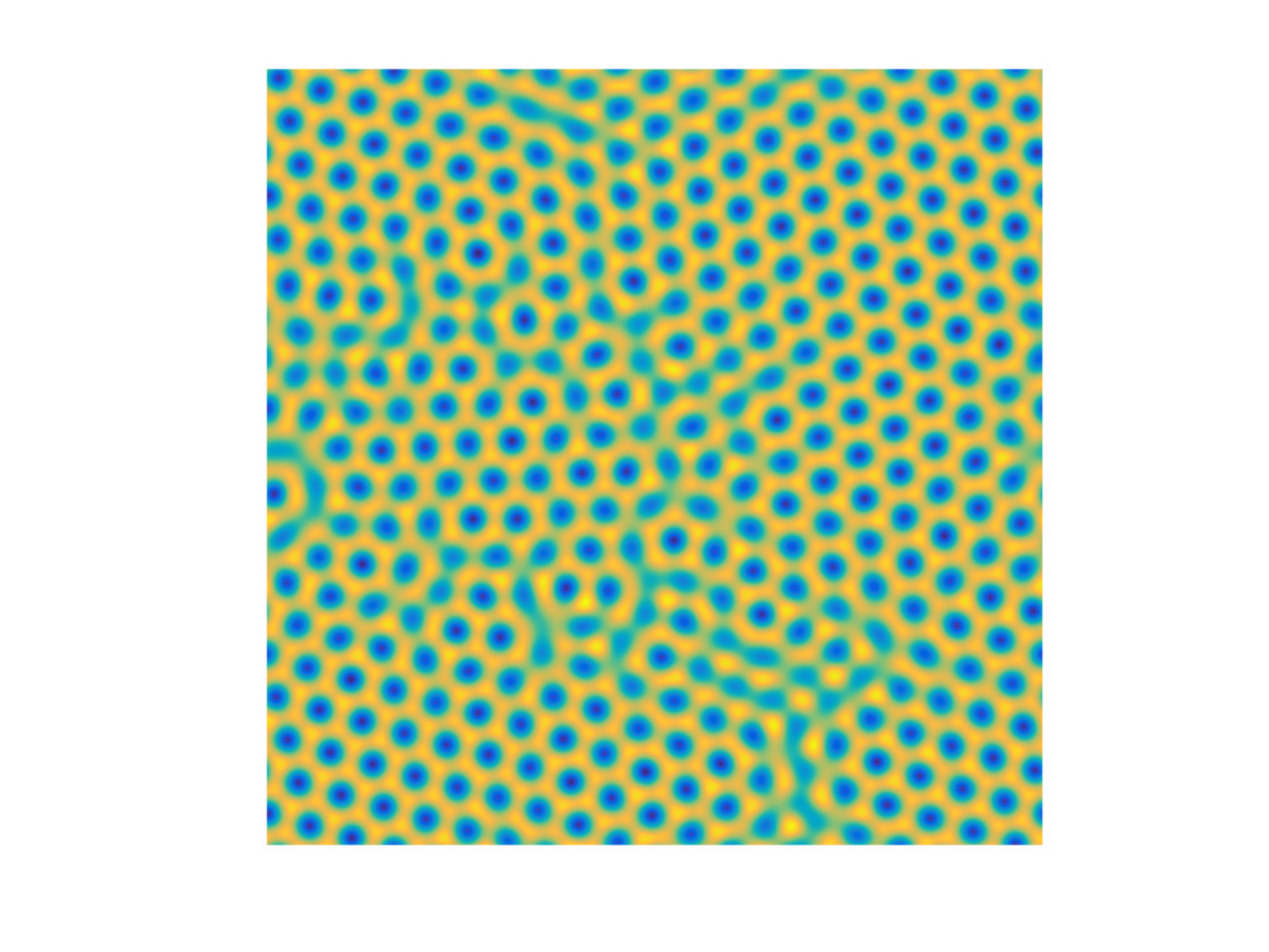}
\includegraphics[scale=0.21]{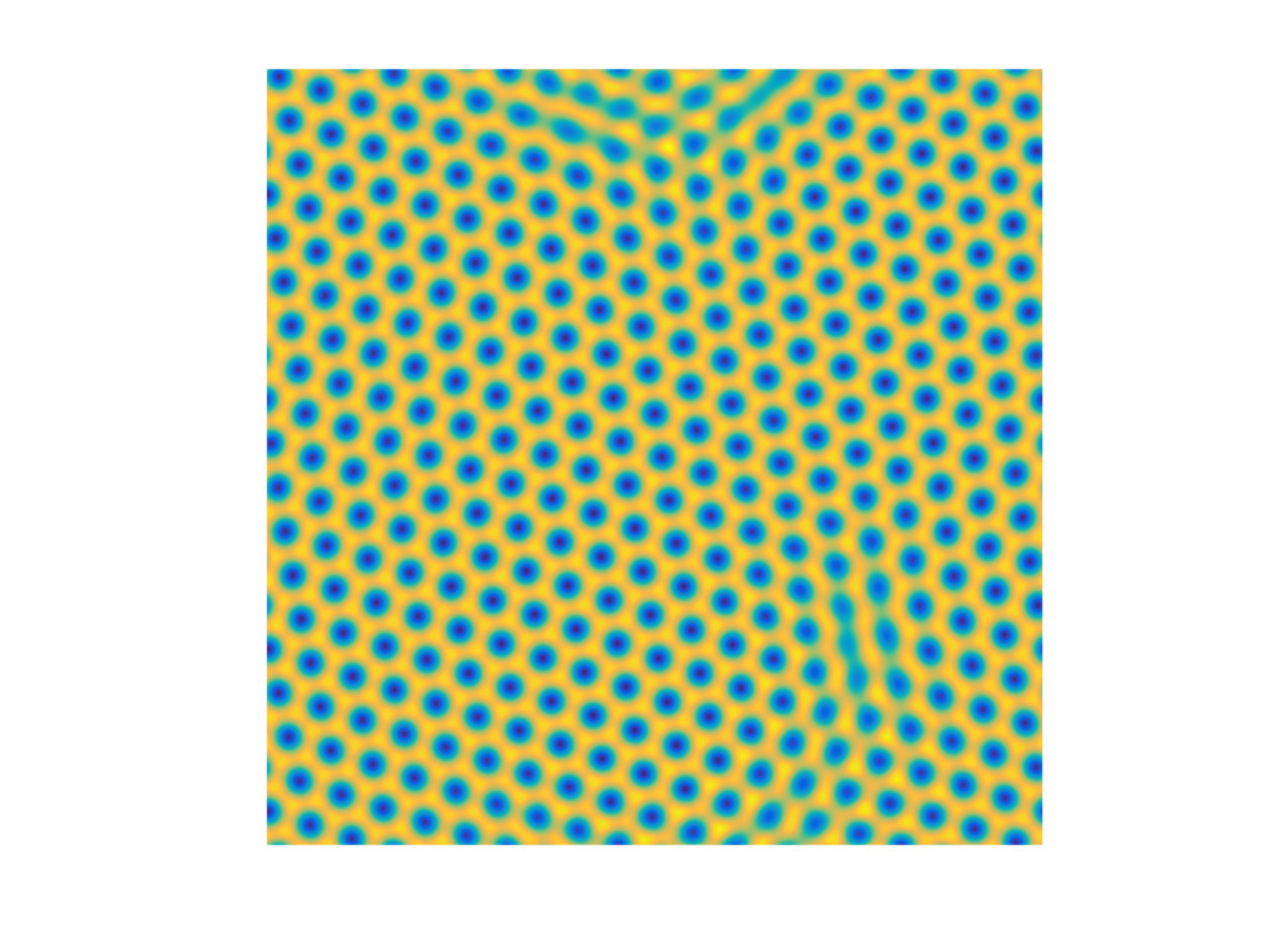}
\caption{The evolution of the density field $\phi$ calculated using the second order scheme at $t=150$, $260$, $400$, $500$, $1000$, $2000$, respectively.}  \label{fig: long time simulation}
\end{figure}

\subsection{Crystal growth in a supercooled liquid}
In this subsection, we simulate the crystal growth in a supercooled liquid with the following 
expression to define the crystallites:
\begin{equation}\label{e_initial_crystal growth}
\aligned
\phi_0(x_l,y_l)=\phi_{ave}+C_1\left(cos(\frac{C_2}{\sqrt{3}}y_l)cos(C_2 x_l)-0.5cos(\frac{2C_2}{\sqrt{3}}y_l)\right), \ \ l=1,2,3,
\endaligned
\end{equation}
where $x_l$ and $y_l$ define a local system of cartesian coordinates that is oriented with the crystallite lattice, and the constant parameters $\phi_{ave}$, $C_1$ and $C_2$ take the values $\phi_{ave}=0.285$, $C_1=0.446$ and $C_2=0.66$. Then we define the initial 
configuration by setting three perfect crystallites in three small square patches which are located at $(350,400)$, $(200,200)$ and $(600,300)$ with the length of each square is 40, similar numerical examples can be found in \cite{li2019efficient}. To generate crystallites with different orientations, we use the following affine transformation to produce a rotation given by three different angles $\theta=-\frac{\pi}{4},0,\frac{\pi}{4}$ respectively:
\begin{equation}\label{e_affine transformation}
\aligned
x_l(x,y)=x\sin(\theta)+y\cos(\theta), \ \ y_l(x,y)=-x\cos(\theta)+y\sin(\theta).
\endaligned
\end{equation}

In this simulation, we take the parameters $\epsilon=0.25$, $M=1$, $S=0.1$, $\lambda=0.001$ and $T=800$. $512^2$ Fourier modes are used to discretize the space and $\Delta t=0.05$. Figure \ref{fig: crystal growth} demonstrates the evolution of the phase transition behavior using the second-order scheme \eqref{e_fully second order discrete} at different times $t=0$, 50, 100, 200, 300, 400, 500, 600, 800, respectively. One can  observe  the growth of the crystalline phase and the motion of well-defined crystal-liquid interfaces. Besides, we can see that the different alignment of the crystallites causes defects and dislocations.

\begin{figure}[!htp]
\centering
\includegraphics[scale=0.21]{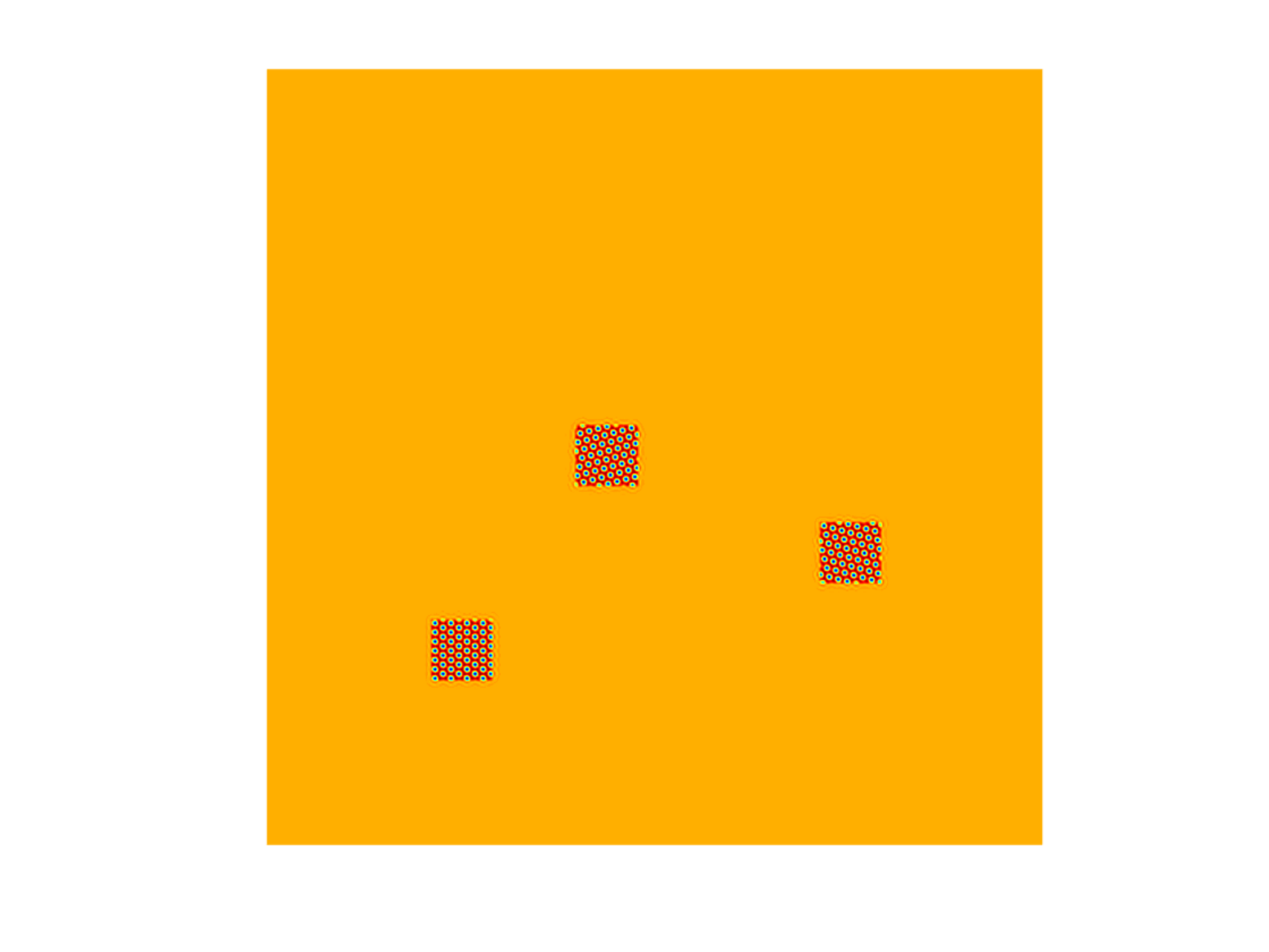}
\includegraphics[scale=0.21]{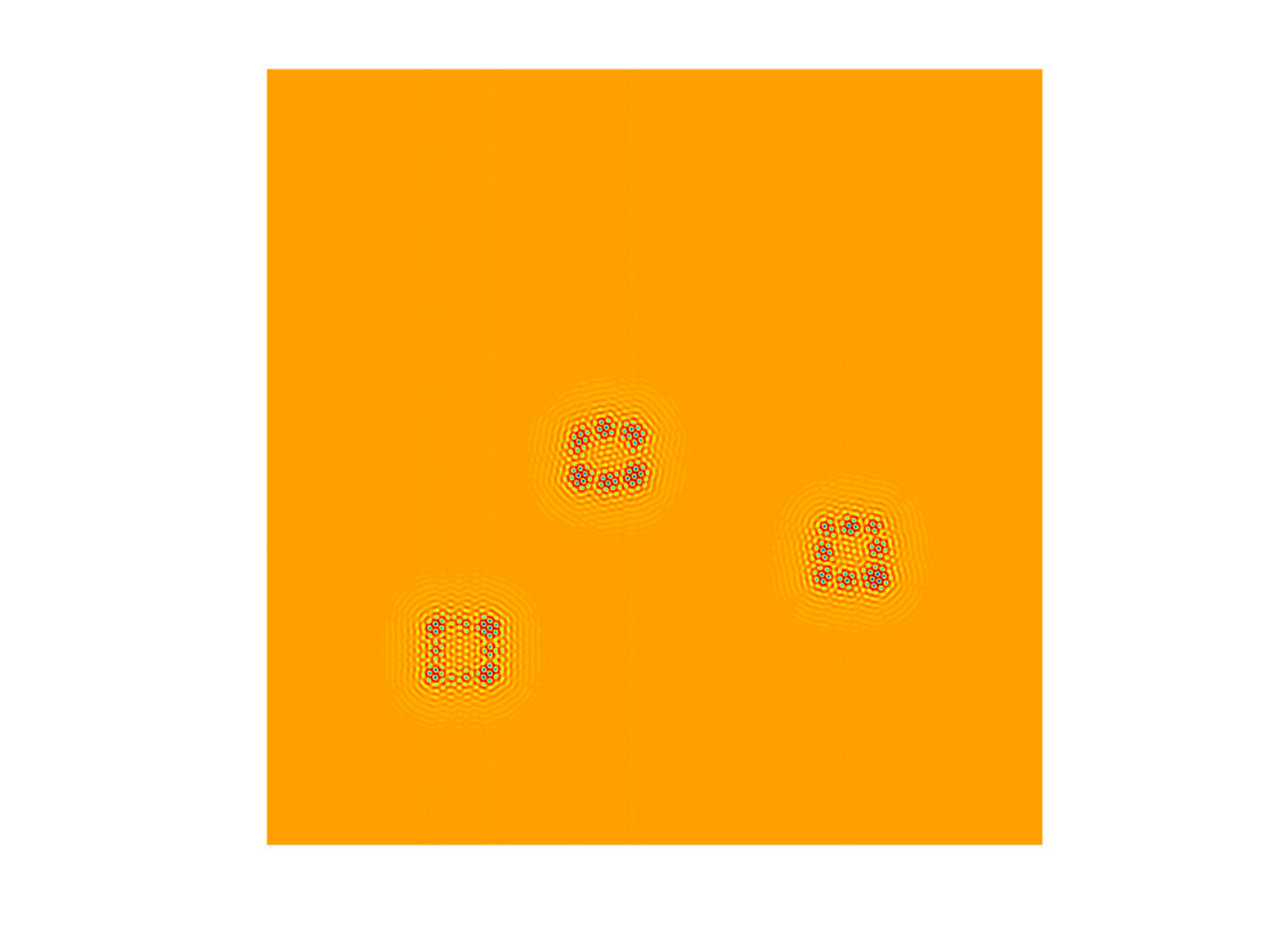}
\includegraphics[scale=0.21]{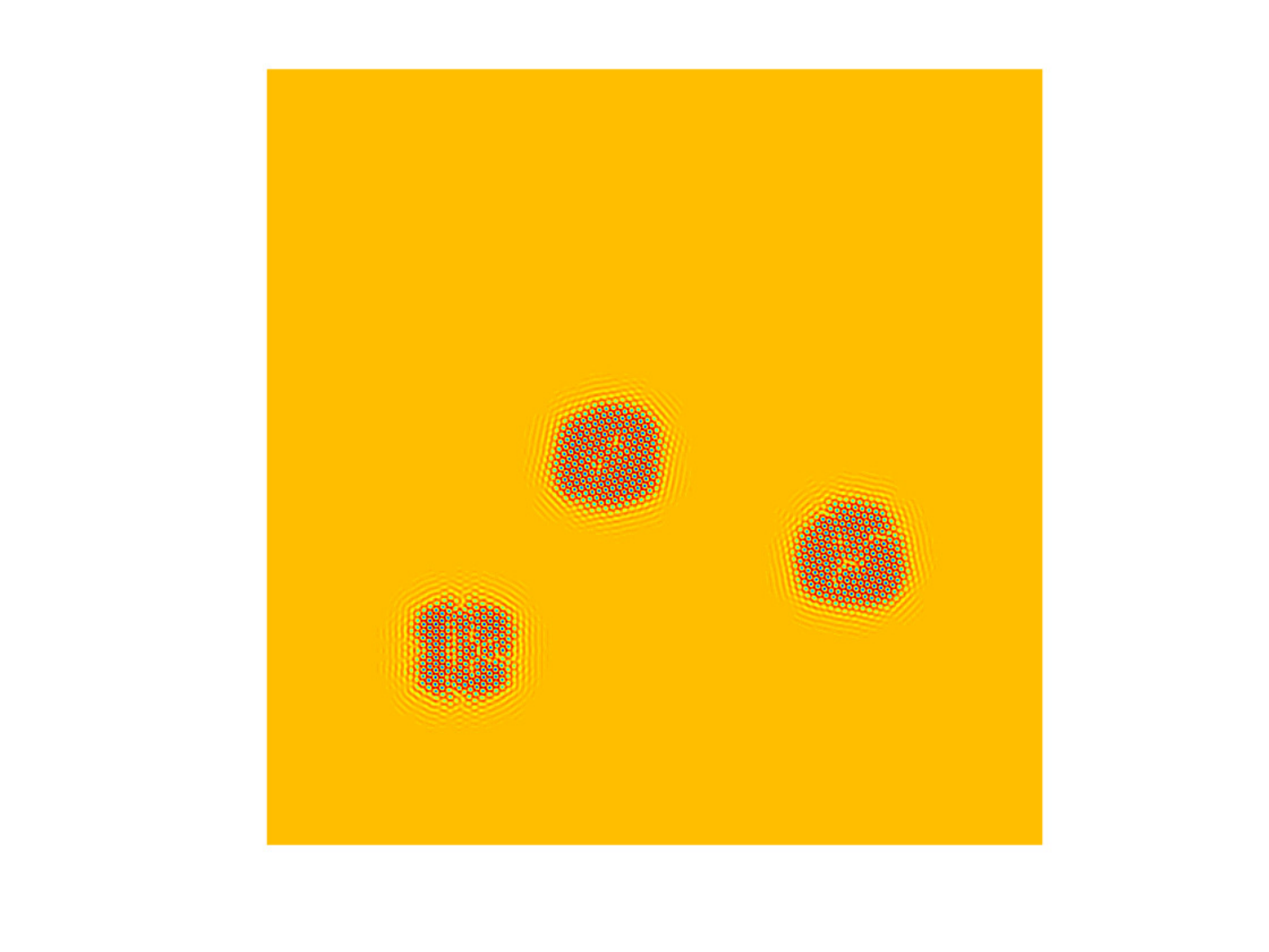}
\includegraphics[scale=0.21]{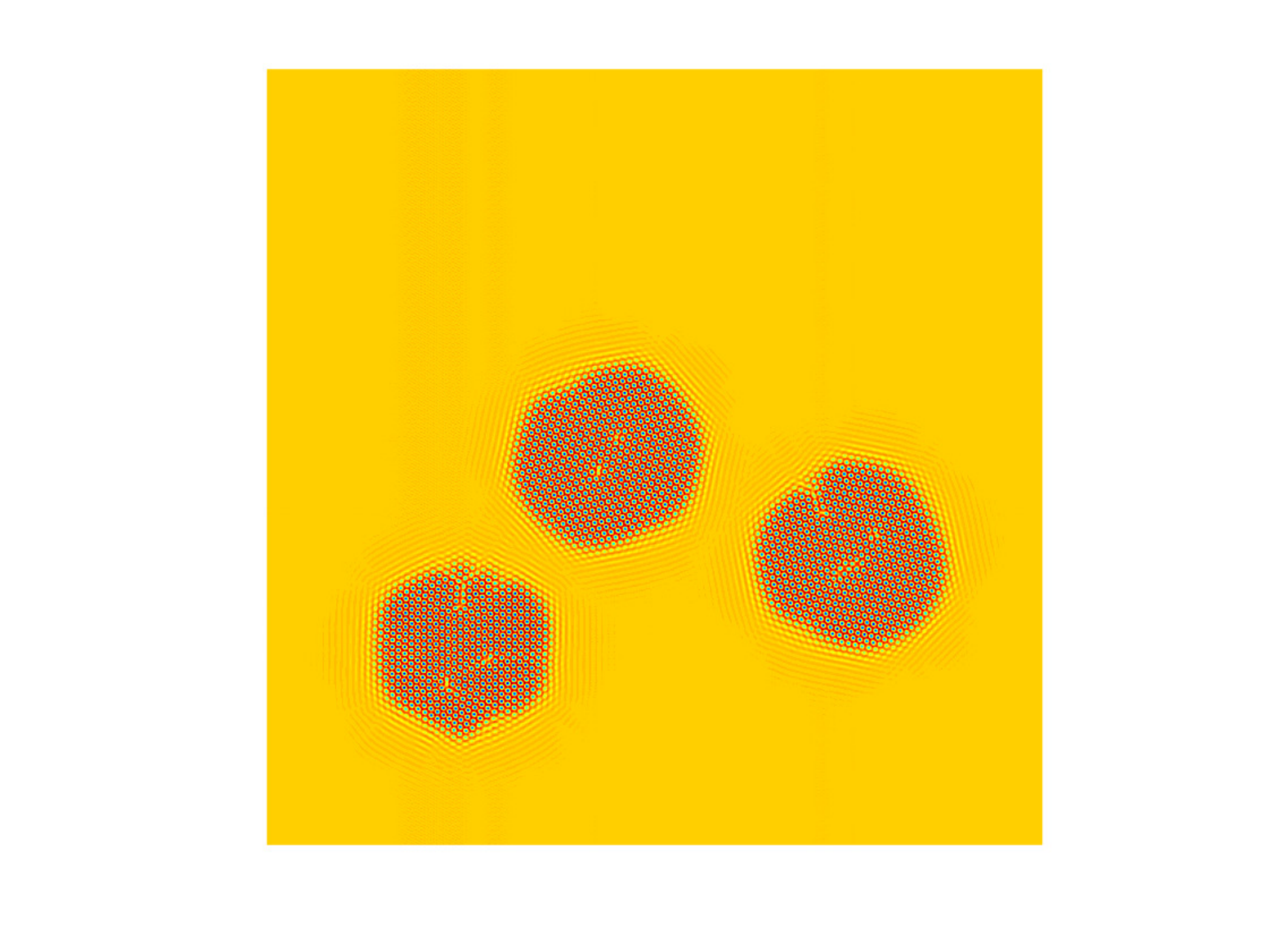}
\includegraphics[scale=0.21]{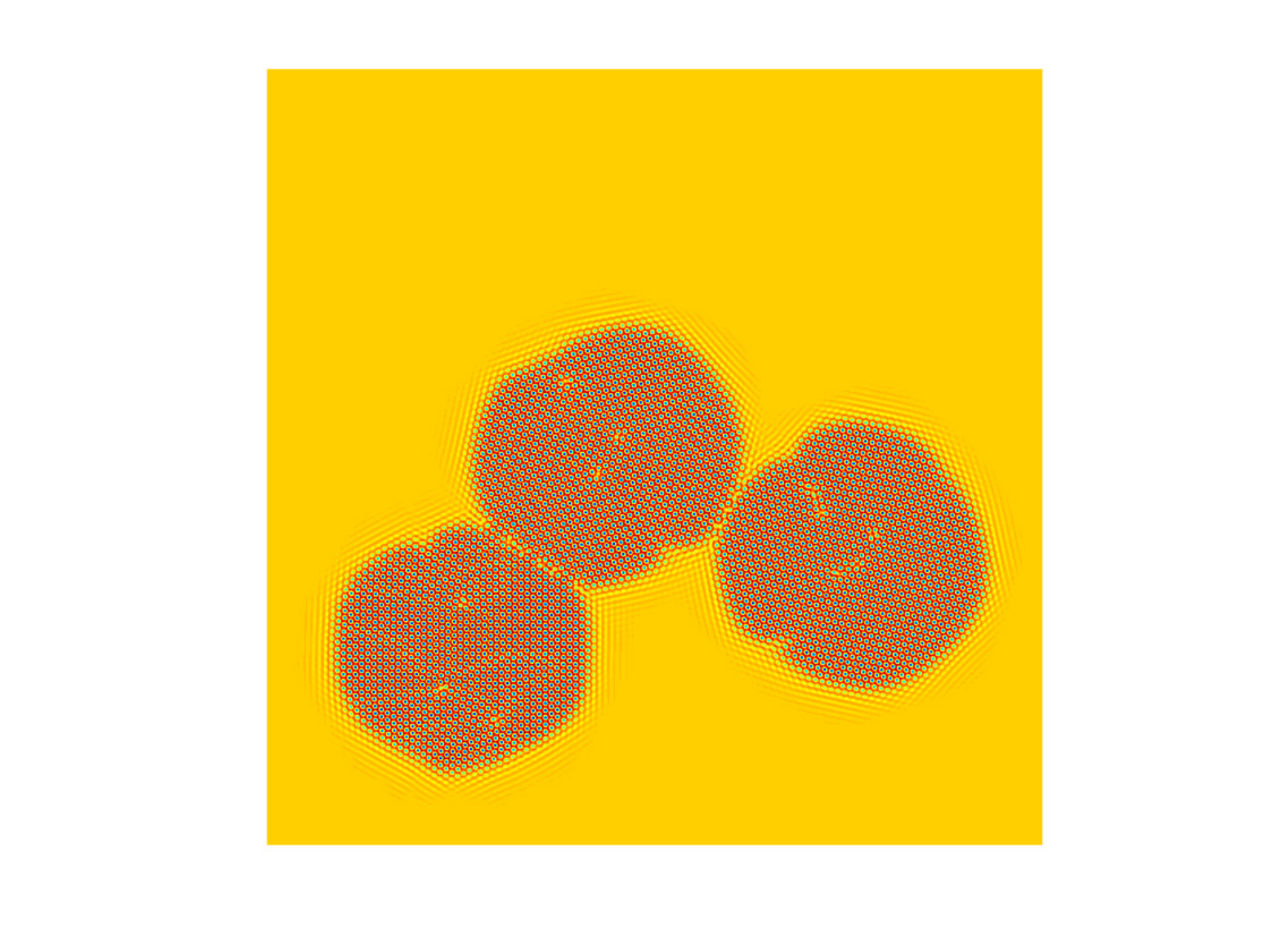}
\includegraphics[scale=0.21]{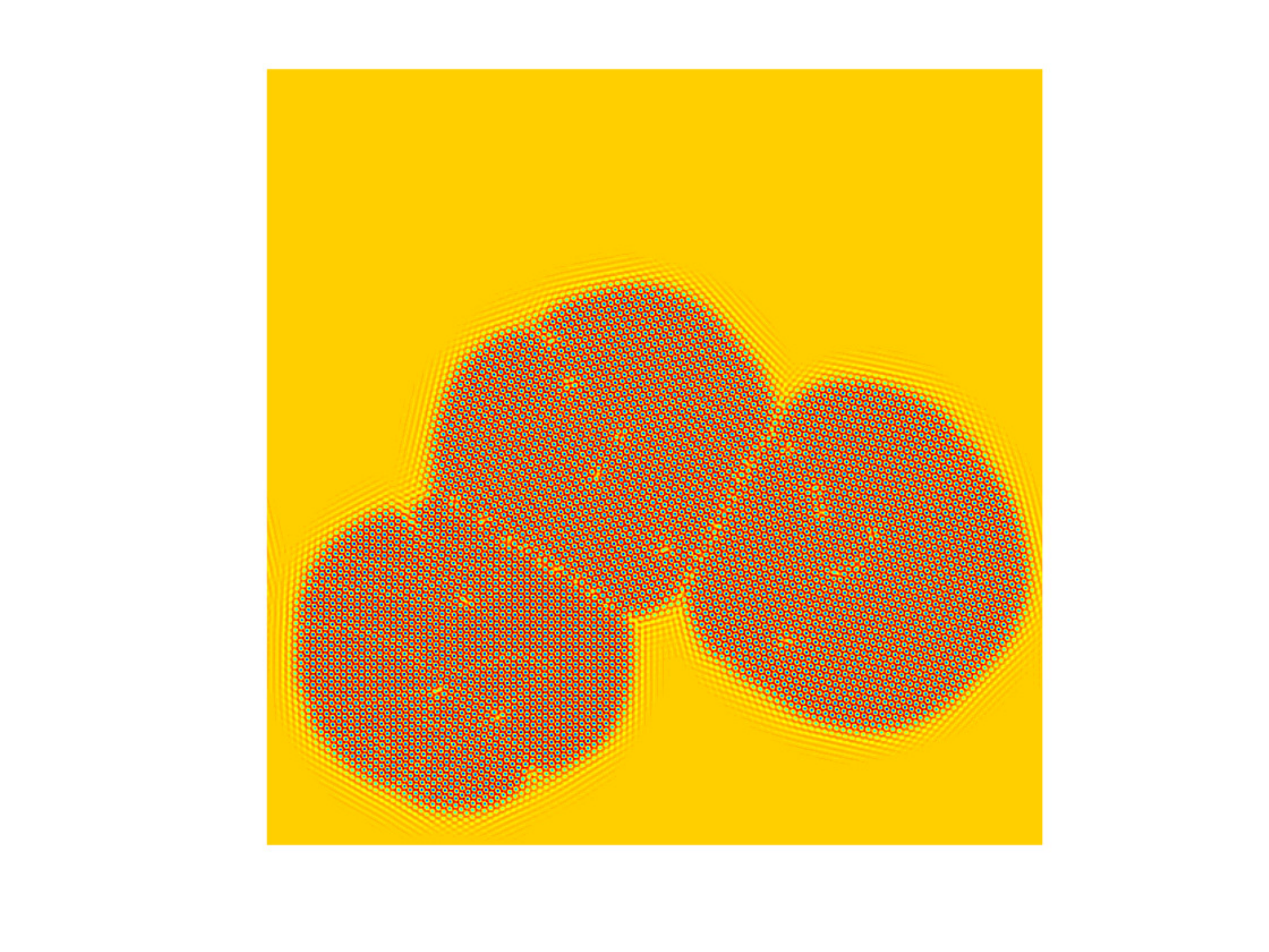}
\includegraphics[scale=0.21]{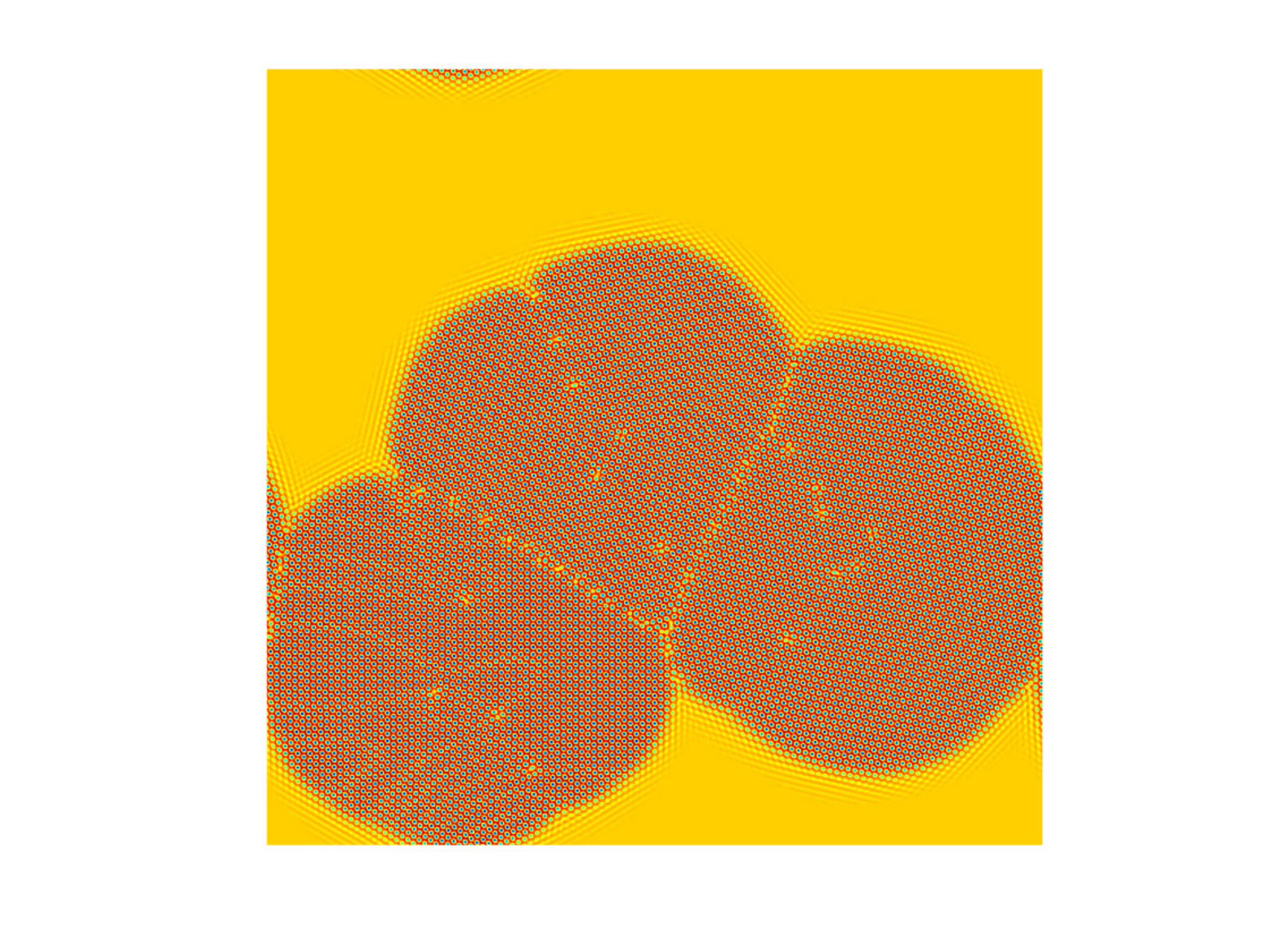}
\includegraphics[scale=0.21]{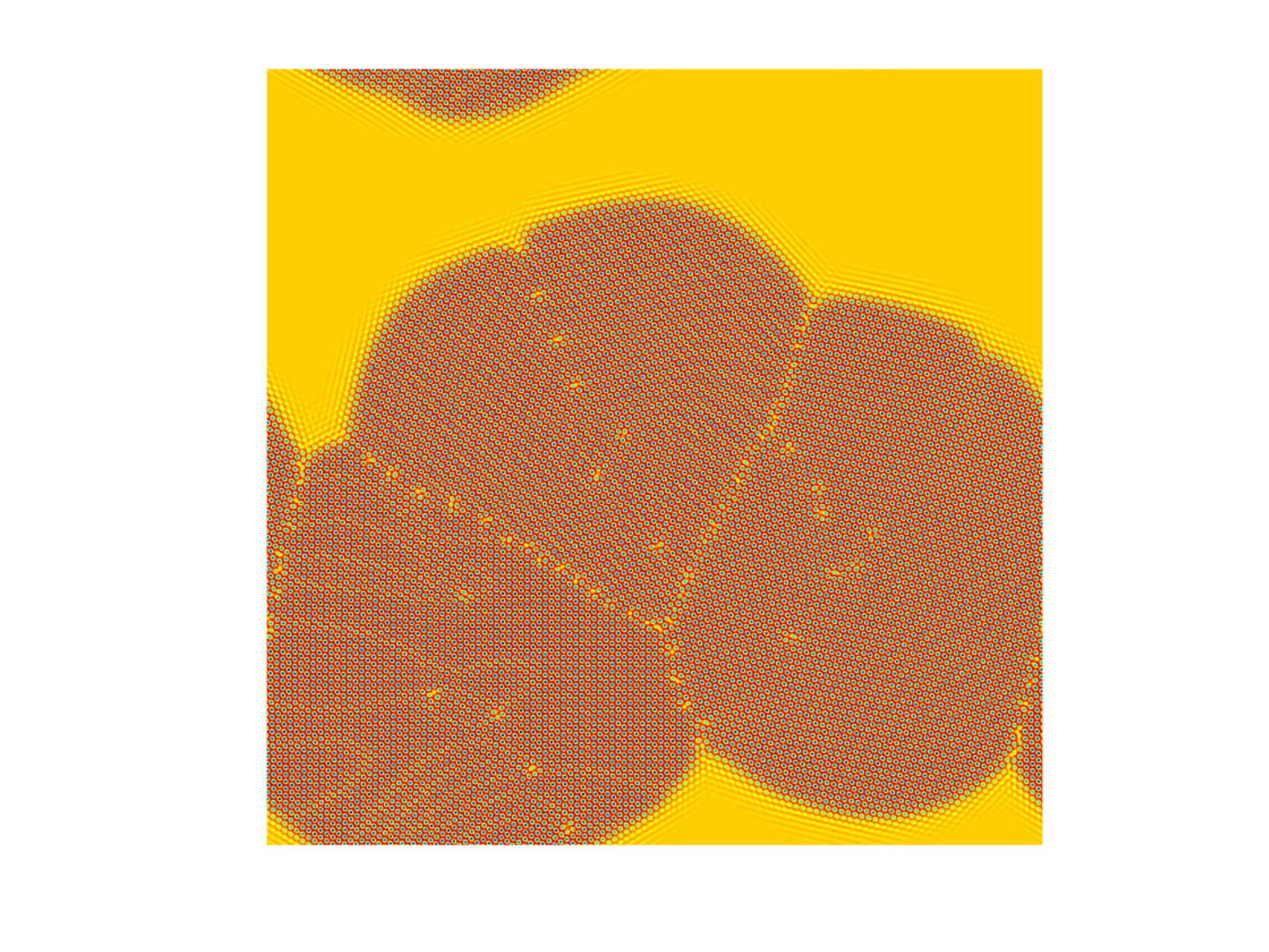}
\includegraphics[scale=0.21]{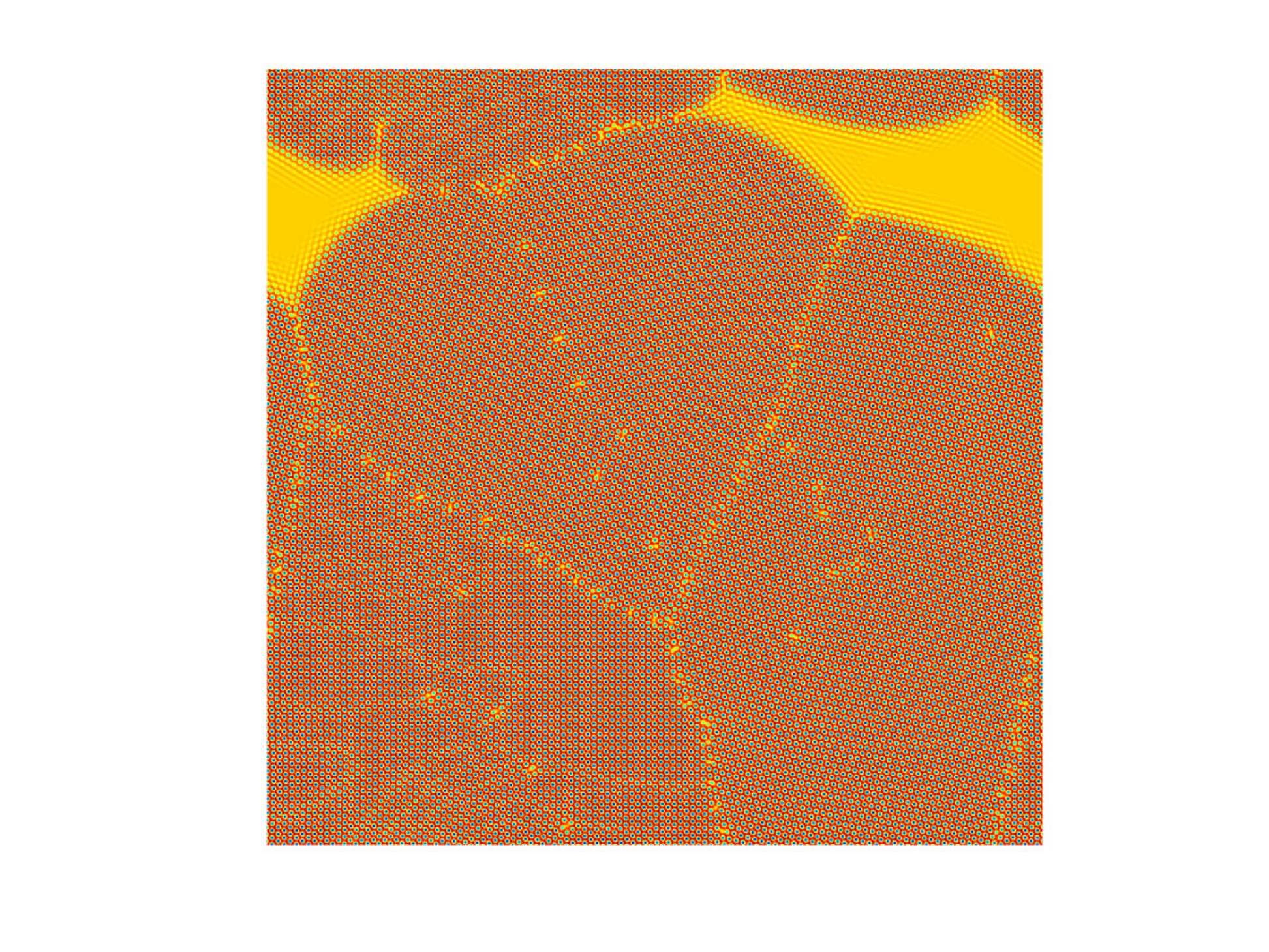}
\caption{The dynamical behaviors of the crystal growth in a supercooled liquid. Snapshots of the numerical approximation of the density field $\phi$ are taken at $t=0$, 50, 100, 200, 300, 400, 500, 600, 800, respectively.}  \label{fig: crystal growth}
\end{figure}
\section{Conclusion}

We developed  fully discrete, unconditionally energy stable schemes based on the  SAV  and stabilized SAV approaches in time and the Fourier-spectral method in space for the phase field crystal (PFC) equation.   We also carried out a rigorous error analysis which provided optimal error estimates in both time and space.  To the best of our knowledge, this is the first such result for any fully discrete linear schemes for the PFC model.

We showed that while the stabilization term is not required for stability or convergence, it is essential for the schemes to achieve reasonable accuracy without using exceedingly small time steps. 
We presented  numerical experiments  to demonstrate the accuracy and robustness of our schemes for the PFC model.


\bibliographystyle{siamplain}
\bibliography{Spectral_PFC}

\end{document}